\numberwithin{equation}{section}
\newtheorem{theorem}{Theorem}[section]
\newtheorem{lemma}[theorem]{Lemma}
\theoremstyle{definition}
\newtheorem{definition}[theorem]{Definition}
\theoremstyle{remark}
\newtheorem{remark}[theorem]{Remark}
\DeclareMathOperator{\diag}{diag}
\DeclareMathOperator{\grad}{grad}
\author[Faminskii]{O. S. Balashov, A. V. Faminskii}
\address{RUDN University, 6 Miklukho-Maklaya St, Moscow, 117198, Russian Federation}
\email{balashovos@s1238.ru, afaminskii@sci.pfu.edu.ru}
\subjclass{93B05, 35Q53, 35Q55}
\keywords{initial-boundary value problem, inverse problem, system of quasilinear evolution equations, odd order}
\title[Problems for systems odd-order evolution equations]{On direct and inverse problems for odd-order systems of quasilinear evolution equations}
\date{}
\begin{document}

\begin{abstract}
Direct and inverse initial-boundary problems on a bounded interval for systems of quasilinear evolution equations with general nonlinearities are considered. In the case of inverse problems conditions of integral overdetermination are introduced and right-hand sides of equations of special types are chosen as controls. Results on well-posedness of such problems are established. Assumptions on smallness of the input data or smallness of a time interval are required.
\end{abstract}

\maketitle

\section{Introduction. Notation. Description of main results}\label{S1}

Consider the following system of odd-order quasilinear equations
\begin{multline}\label{1.1}
u_t - (-1)^l(a_{2l+1}\partial_x^{2l+1} u + a_{2l} \partial_x^{2l} u) -
\sum\limits_{j=0}^{l-1} (-1)^j \partial_x^j\bigl[a_{2j+1}(t,x) \partial_x^{j+1} u + 
a_{2j}(t,x) \partial_x^{j} u\bigr] \\+
\sum\limits_{j=0}^l (-1)^j \partial_x^j \bigl[g_j(t,x,u,\dots,\partial_x^{l-1} u)\bigr] = f(t,x),\quad l\in \mathbb N,
\end{multline}
posed on an interval $I = (0,R)$ ($R>0$ is arbitrary). Here $u=u(t,x) =(u_1,\dots,u_n)^T$, $n\in \mathbb N$, is an unknown vector-function, $f = (f_1,\dots,f_n)^T$, $g_j = (g_{j1},\dots,g_{jn})^T$ are also vector-functions, $a_{2l+1} = \diag(a_{(2l+1)i})$, $a_{2l} = \diag(a_{(2l)i})$, $i=1,\dots,n$, are constant diagonal $n\times n$ matrices, $a_j(t,x) = \bigl(a_{j i m}(t,x)\bigr)$, $i,m = 1,\dots,n$, for $j= 0,\dots,2l-1$, are $n\times n$ matrices. 

In a rectangle $Q_T = (0,T)\times I$ for certain $T>0$ consider an initial-boundary value problem for system \eqref{1.1} with an initial condition
\begin{equation}\label{1.2}
u(0,x) = u_0 (x),\quad x\in [0,R],
\end{equation}
and boundary conditions
\begin{equation}\label{1.3}
\partial_x^j u(t,0) = \mu_j(t),\ j=0,\dots,l-1, \quad \partial_x^j u(t,R) = \nu_j(t), \ j=0,\dots,l,\quad t\in [0,T],
\end{equation}
where $u_0 = (u_{01},\dots,u_{0n})^T$, $\mu_j = (\mu_{j1},\dots,\mu_{jn})^T$, $\nu_j = (\nu_{j1},\dots,\nu_{jn})^T$.

Besides this direct problem consider the following inverse problem: let for any $i=1,\dots,n$ the function $f_i$ is represented in a form
\begin{equation}\label{1.4}
f_i(t,x) \equiv h_{0i}(t,x) +\sum\limits_{k=1}^{m_i} F_{ki}(t) h_{ki}(t,x)
\end{equation}
for certain non-negative integer number $m_i$ (if $m_i=0$ then $f_i = h_{0i}$), where the functions $h_{ki}$ are given and the functions $F_{ki}$ are unknown. Then problem \eqref{1.1}--\eqref{1.3} is supplemented with overdetermination conditions in an integral form: if $m_i>0$ for certain $i$ then
\begin{equation}\label{1.5}
\int_I u_i(t,x) \omega_{ki}(x)\, dx = \varphi_{ki}(t),\quad t\in [0,T], \quad k=1,\dots,m_i,
\end{equation}
for certain given functions $\omega_{ki}$ and $\varphi_{ki}$. In particular, for certain $i$ the overdetermination conditions on $u_i$ can be absent, but in the case of the inverse problem we always assume that 
\begin{equation}\label{1.6}
M = \sum\limits_{i=1}^n m_i >0.
\end{equation}
Then the aim is to find the functions $F_{ki}$ such that the corresponding solution $u$ to problem \eqref{1.1}--\eqref{1.3} satisfies conditions \eqref{1.5}. 

In the case of a single equation $n=1$ equations of type \eqref{1.1} were considered in \cite{F21} (direct problem)  and \cite{F23} (inverse problems). In particular, in these articles one can found examples of physical models, which can be described by such equations: the Korteweg--de~Vries (KdV) and Kawahara equations with generalizations, the Kortewes--de~Vries--Burgers and Benney-Lin equations, the Kaup--Kupershmidt equation and others. However, besides  the single equations, systems of odd-order quasilinear evolution equations also arise in real physical situations. Among such systems on can mention the Majda--Biello system (see \cite{MB})
$$
\begin{cases}
& u_t + u_{xxx} + vv_x =0,\\
& v_t + \alpha v_{xxx} + (uv)_x =0,\quad \alpha>0,
\end{cases}
$$
and more general systems of KdV-type equations with coupled nonlinearities (\cite{BCG}).

The KdV-type Boussinesq system (\cite{BGK, PR, SX})
$$
\begin{cases}
& u_t + v_x + v_{xxx} + (uv)_x =0,\\
& v_t + u_x + u_{xxx} +vv_x =0
\end{cases}
$$
and the coupled system of two KdV equations, derived in \cite{GG} and  studied in \cite{BBM1, BBM2, BPST, LP, MCW, MP, MMP, MOP, NPR} (also with more general nonlinearities)
$$
\begin{cases}
& u_t + uu_x +u_{xxx} + a_3 v_{xxx} + a_1 vv_x + a_2 (uv)_x =0,\\
& b_1 v_t + r v_x +vv_x +b_2 a_3 u_{xxx} + v_{xxx} +b_2 a_2 uu_x=0,\quad b_1>0, b_2>0,
\end{cases}
$$
are not directly written in form \eqref{1.1}, but can be transformed to it by a linear change of unknown functions (see \cite{BBM1, BGK, PR}).

In paper \cite{F21} the initial-boundary value problem \eqref{1.1}--\eqref{1.3} was considered in the scalar case and the result on global well-posedness in the class of weak solutions under small input data was established. For simplicity it was assumed there that $\mu_j(t) = \nu_j(t) \equiv 0$ for $j\leq l-1$. Note that the general case \eqref{1.3} can be reduced to the homogeneous one by the simple substitution $v(t,x) = u(t,x) - \psi(t,x)$, where the sufficiently smooth function $\psi$ satisfies \eqref{1.3} for $j\leq l-1$, while the form of equation \eqref{1.1} is invariant under the corresponding transformation. 

In the present paper the result on global well-posedness of problem \eqref{1.1}--\eqref{1.3} itself is obtained in the class of weak solutions under small input data. Note that in the aforementioned articles in the case of systems such a problem was not studied. The assumptions on system \eqref{1.1} are similar to the ones in \cite{F21, F23} in the case of single equations.

The significance of integral overdermination conditions in inverse problems is discussed in \cite{POV}. The study of inverse problems for the KdV-type equation with the integral overdetermination was started in \cite{F19-1}. In paper \cite{F23} for problem \eqref{1.1}--\eqref{1.3} in the scalar case two inverse problems with one integral overdetemination condition of \eqref{1.5} type were considered. In the first one the right-hand side of the equation of the type similar to \eqref{1.4} was chosen as the control, in the second one --- the boundary data $\nu_l$. Results on well-posedness either for small input data or small time interval were established. In paper \cite{FM} an initial-boundary value problem on a bounded interval for the higher order nonlinear Schr\"{o}dinger equation
$$
i u_t  + a u_{xx} + i b u_x + i u_{xxx} + \lambda |u|^p u +
i \beta \bigl( |u|^p u\bigr)_x + i \gamma \bigl( |u|^p\bigr)_x u= 0
$$
($u$ is a complex-valued function) with initial and boundary conditions similar to \eqref{1.2}, \eqref{1.3} was considered and three inverse problems were studied. The first two of them were similar to the problems considered in \cite{F23} with similar results. In the third problem two overdetermination conditions of \eqref{1.5} type were introduced and both the right-hand side of the equation and the boundary function were chosen as controls. Results were similar to the first two cases.

Note that the inverse problem with two integral overdetermination conditions for the Korteweg--de~Vries type equation
$$
u_t + u_{xxx} + uu_x +\alpha(t)u = F(t)g(t)
$$
in the periodic case, where the functions $\alpha$ and $F$ were unknown, was considered in \cite{LCL} and the existence and uniqueness results were obtained for a small time interval.

In paper \cite{MOP} an inverse problem on a bounded interval with the terminal overdetermination condition 
$$
u(T,x) = u_T(x)
$$
for a given function $u_T$ (such problems are called controllability ones) was studied for the aforementioned coupled system of two KdV equations. Results on existence of solutions under small input data were established.

In the present paper results on well-posedness of inverse problem \eqref{1.1}--\eqref{1.6} are obtained either for small input data or small time interval. Note that since the amount of integral overdetermination conditions is arbitrary, the result is new even in the case of one equation.

Solutions of the considered problems are constructed in a special function space $u \in \bigl(X(Q_T)\bigr)^n$, where for every $i$
$$
u_i (t,x) \in X(Q_T) = C([0,T]; L_2(I)) \cap L_2(0,T; H^l(I)),
$$
endowed with the norm
$$
\|u\|_{(X(Q_T))^n} = \sum\limits_{i=1}^n\Bigl(\sup\limits_{t\in (0,T)} \|u_i(t,\cdot)\|_{L_2(I)} + \|\partial_x^l u_i\|_{L_2(Q_T)}\Bigr).
$$
For $r>0$ let $\overline X_{rn}(Q_T)$ denote the closed ball $\{u\in\bigl( X(Q_T)\bigr)^n: \|u\|_{(X(Q_T))^n} \leq r\}$.

Introduce the notion of a weak solution of problem \eqref{1.1}--\eqref{1.3}.

\begin{definition}\label{D1.1}
Let $u_0 \in \bigl(L_2(I)\bigr)^n$, $\mu_j, \nu_j \in \bigl(L_2(0,T)\bigr)^n \ \forall j$, $f\in \bigl(L_1(Q_T)\bigr)^n$, $a_j \in \bigl(C(\overline Q_T)\bigr)^{n^2} \ \forall j$. A function $u\in \bigl(X(Q_T)\bigr)^n$ is called a weak solution of problem \eqref{1.1}--\eqref{1.3} if $\partial_x^j u(t,0) \equiv \mu_j(t)$, $\partial_x^j u(t,R) \equiv \nu_j(t)$, $j=0,\dots, l-1$, and for all test functions $\phi(t,x)$, such that $\phi\in \bigl(L_2(0,T;H^{l+1}(I))\bigr)^n$, $\phi_t \in \bigl(L_2(Q_T)\bigr)^n$, $\phi\big|_{t=T} \equiv 0$, $\partial_x^{j} \phi\big|_{x=0} = \partial_x^j \phi\big|_{x=R} \equiv 0$, $j=0,\dots,l-1$, $\partial_x^l \phi\big|_{x=0} \equiv 0$, the functions
$\bigl(g_j(t,x,u,\dots,\partial_x^{l-1}u),\partial_x^j\phi\bigr) \in L_1(Q_T)$, $j=0,\dots,l$, and the following integral identity is verified: 
\begin{multline}\label{1.7}
\iint_{Q_T} \Bigl[ (u,\phi_t) - (a_{2l+1}\partial_x^l u, \partial_x^{l+1}\phi) + 
(a_{2l} \partial_x^l u, \partial_x^l\phi)  \\+
\sum\limits_{j=0}^{l-1} \bigl((a_{2j+1}\partial_x^{j+1}u +a_{2j}\partial_x^j u), \partial_x^j\phi \bigr)  - 
\sum\limits_{j=0}^l \bigl(g_j(t,x,u,\dots,\partial_x^{l-1}u), \partial_x^j\phi\bigr) \\ +
(f,\phi) \Bigr]\,dxdt  +
\int_I (u_0, \phi\big|_{t=0}) \,dx + 
\int_0^T (a_{2l+1}\nu_l, \partial_x^l\phi\big|_{x=R}) \,dt =0,
\end{multline}
where $(\cdot,\cdot)$ denotes the scalar product in $\mathbb R^n$. 
\end{definition}

Let $\widehat f(\xi)\equiv \mathcal F[f](\xi)$ and $\mathcal F^{-1}[f](\xi)$ be
the direct and inverse Fourier transforms of a function $f$, respectively . In particular, for
$f\in \mathcal S(\mathbb R)$
$$
\widehat f(\xi)=\int_{\mathbb R} e^{-i\xi x} f(x)\,dx,\qquad \mathcal F^{-1}[f](x)=\frac 1 {2\pi} \int_{\mathbb R} e^{i\xi x} f(\xi)\,d\xi.
$$
For $s\in \mathbb R$ define the fractional order Sobolev space
$$
H^s(\mathbb R)= \bigl\{f: \mathcal F^{-1}[(1+|\xi|^s)\widehat f(\xi)] \in L_2(\mathbb R)\bigr\}
$$
and for certain $T>0$ let $H^s(0,T)$ be the space of restrictions on $(0,T)$ of functions from $H^s(\mathbb R)$. To describe properties of boundary functions $\mu_j$, $\nu_j$ we use the following function spaces. Let $m=l-1$ or $m=l$,  define 
$$
\bigl(\mathcal B^m(0,T)\bigr)^n=\Bigl(\prod\limits_{j=0}^m H^{(l-j)/(2l+1)}(0,T)\Bigr)^n,
$$
endowed with the natural norm.

The coefficients of the linear part of the system further are always assumed to verify the following conditions:
\begin{equation}\label{1.8}
a_{(2l+1)i}>0, \quad a_{(2l)i}\leq 0, \quad i=1,\dots,n,
\end{equation}
and for any $0 \leq j \leq l-1$, $i, m = 1,\dots n$
\begin{equation}\label{1.9}
\partial_x^k a_{(2j+1) i m} \in C(\overline Q_T),\ k=0,\dots,j+1, \quad 
\partial_x^k a_{(2j) i m} \in C(\overline Q_T),\ k=0,\dots,j.
\end{equation}

Let $y_m = (y_{m1},\dots,y_{mn})$ for $m=0,\dots,l-1$.
The functions $g_j(t,x,y_0,\dots,y_{l-1})$ for any $0\leq j \leq l$ are always subjected to the following assumptions: for $i= 1,\dots,n$
\begin{equation}\label{1.10}
g_{j i}, \grad_{y_k} g_{j i} \in C(\overline{Q}_T\times \mathbb R^{ln}), \ j=0,\dots,l-1, \quad g_{ji}(t,x,0,\dots,0) \equiv 0,
\end{equation}
\begin{multline}\label{1.11}
\bigl| \grad_{y_k} g_{ji}(t,x,y_0,\dots,y_{l-1})\bigr| \leq c \sum\limits_{m=0}^{l-1} 
\bigl(|y_m|^{b_1(j,k,m)} + |y_m|^{b_2(j,k,m)}\bigr), 
k=0,\dots,l-1,\\ \forall (t,x,y_0,\dots,y_{l-1}) \in Q_T\times \mathbb R^{ln},
\end{multline}
where $0<b_1(j,k,m) \leq b_2(j,k,m)$, $|y_m| = (y_m,y_m)^{1/2}$. 

Regarding the functions $\omega_{ki}$ we always need the following conditions:
\begin{equation}\label{1.12}
\omega \in H^{2l+1}(I),\quad \omega^{(m)}(0)=0,\ m=0,\dots,l, \quad \omega^{(m)}(R) =0,\ m=0,\dots, l-1,
\end{equation}
for all $\omega_{ki}$ (where here $\omega$ stands for $\omega_{ki}$).

Now we can pass to the main results and begin with the direct problem.

\begin{theorem}\label{T1.1}
Let the coefficients $a_j$, $j=0,\dots,2l+1$, satisfy conditions \eqref{1.8}, \eqref{1.9}. Let the functions $g_j$ satisfy conditions \eqref{1.10}, \eqref{1.11}, where
\begin{equation}\label{1.13}
b_2(j,k,m) \leq \frac{4l-2j-2k}{2m+1} \quad \forall\ j,k,m.
\end{equation}
Let $u_0 \in \bigl(L_2(I)\bigr)^n$, 
$(\mu_0,\dots,\mu_{l-1}) \in \bigl(\mathcal B^{l-1}(0,T)\bigr)^n$, 
$(\nu_0,\dots,\nu_l) \in \bigl(\mathcal B^l(0,T)\bigr)^n$, $f\in \bigl(L_1(0,T;L_2(I))\bigr)^n$ for an arbitrary $T>0$.
Denote
\begin{multline}\label{1.14}
c_0 = \|u_0\|_{(L_2(I))^n} + \|(\mu_0,\dots,\mu_{l-1})\|_{(\mathcal B^{l-1}(0,T))^n} + 
\|(\nu_0,\dots,\nu_l)\|_{(\mathcal B^l(0,T))^n} \\ +\|f\|_{(L_1(0,T;L_2(I)))^n}.
\end{multline}
Then there exists $\delta>0$ such that under the assumption $c_0 \leq \delta$ there exists a unique weak solution of problem \eqref{1.1}--\eqref{1.3} $u\in \bigl(X(Q_T)\bigr)^n$. Moreover, the map
\begin{equation}\label{1.15}
\bigl(u_0, (\mu_0,\dots,\mu_{l-1}), (\nu_0,\dots,\nu_l), f \bigr) \to u
\end{equation}
is Lipschitz continuous on the closed ball of the radius $\delta$ in the space 
$\bigl(L_2(I)\bigl)^n \times \bigl(\mathcal B^{l-1}(0,T)\bigr)^n \times \bigl(\mathcal B^l(0,T)\bigr)^n \times \bigl(L_1(0,T;L_2(I))\bigr)^n$
into the space $\bigl(X(Q_T)\bigr)^n$.
\end{theorem}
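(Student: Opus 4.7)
The plan is to reduce to the case of homogeneous boundary conditions of order $\leq l-1$ and then set up a contraction mapping argument in the small ball $\overline X_{rn}(Q_T)$, exploiting $c_0\leq\delta$. Following the remark after equation \eqref{1.3}, I would choose a sufficiently smooth auxiliary $\psi(t,x)$ satisfying $\partial_x^j\psi(t,0)=\mu_j(t)$ and $\partial_x^j\psi(t,R)=\nu_j(t)$ for $j\leq l-1$, whose norms are controlled by those of the relevant boundary data. Setting $v=u-\psi$, the system \eqref{1.1} retains its form with a modified source $\tilde f$ that absorbs $\psi_t$, the linear operator applied to $\psi$, and the correction arising from evaluating $g_j$ at $v+\psi$; thereby the new $\mu_j,\nu_j$ for $j\leq l-1$ vanish, and only $\nu_l$ survives (entering through the boundary term in \eqref{1.7}).

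\textbf{Linear theory.} For the associated linear problem (i.e.\ $g_j\equiv 0$), I would construct a solution operator and prove
\[
\|u\|_{(X(Q_T))^n}\leq C_1\bigl(\|u_0\|_{(L_2(I))^n}+\|\nu_l\|_{(\mathcal B^l(0,T))^n}+\|\tilde f\|_{(L_1(0,T;L_2(I)))^n}\bigr)
\]
with $C_1$ uniform on compact time intervals. The argument is componentwise: multiply the $i$-th linear equation by $u_i$ and integrate by parts on $I$. The sign conditions \eqref{1.8} produce, on one hand, a sign-definite boundary contribution at $x=0$ of the form $a_{(2l+1)i}(\partial_x^l u_i)^2$ (the source of the internal Kato smoothing that gives the $L_2(0,T;H^l(I))$ bound) and, on the other hand, a non-negative bulk term $-a_{(2l)i}\int_I(\partial_x^l u_i)^2\,dx$. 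The remaining lower-order linear terms are controlled by \eqref{1.9} and Gronwall, yielding also the $C([0,T];L_2(I))$ bound; $\nu_l$ is handled via its pairing with $\partial_x^l\phi|_{x=R}$ in \eqref{1.7} together with the trace embedding dual to the definition of $\mathcal B^l(0,T)$.

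\textbf{Nonlinear contraction.} Define $\Lambda$ on $\overline X_{rn}(Q_T)$ by $\Lambda v=u$, where $u$ solves the above linear problem with source obtained by replacing $\partial_x^j g_j$ by $\partial_x^j g_j(t,x,v,\dots,\partial_x^{l-1}v)$. The crucial estimate is
\[
\Bigl|\iint_{Q_T}\bigl(g_j(t,x,v,\dots,\partial_x^{l-1}v),\partial_x^j\phi\bigr)\,dxdt\Bigr|\leq C_2\bigl(r^{1+\beta_1}+r^{1+\beta_2}\bigr)\|\phi\|_{*},
\]
where $\beta_i>0$ and $\|\cdot\|_*$ is the dual of the test-function space appearing in \eqref{1.7}. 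Using \eqref{1.10}, \eqref{1.11}, the Gagliardo--Nirenberg interpolation
\[
\|\partial_x^m v\|_{L_q(I)}\lesssim \|v\|_{L_2(I)}^{1-\theta}\|v\|_{H^l(I)}^{\theta},\quad\theta=\frac{2m+1}{2l}-\frac{1}{lq},
\]
and H\"older in $t$, condition \eqref{1.13} is exactly what is needed to keep the total power of $\|v\|_{H^l}$ at most $2$ inside the $t$-integral, so that it closes inside $\|v\|_{(X(Q_T))^n}^{1+\beta_i}$. Choosing $r=2C_1c_0$ and then $\delta$ so small that $C_1C_2(r^{\beta_1}+r^{\beta_2})<\tfrac12$ forces $\Lambda:\overline X_{rn}(Q_T)\to\overline X_{rn}(Q_T)$ to be a strict contraction.

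Finally, applying the same difference estimate to $\Lambda v-\Lambda\widetilde v$ yields uniqueness in the ball and the Lipschitz dependence of the map \eqref{1.15}; global uniqueness in $(X(Q_T))^n$ follows by splitting $[0,T]$ into short subintervals and iterating the difference estimate. The principal obstacle I expect is the nonlinear pairing in Step 3: the exponent bookkeeping governed by \eqref{1.13} is tight, and at the upper end of that range one reaches critical Gagliardo--Nirenberg exponents where the smallness of the initial data (rather than of $T$) is essential to absorb the nonlinearity by the linear estimate.
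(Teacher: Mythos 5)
Your existence argument is essentially the paper's: a contraction for $u=\Theta v$ built from the linear solution operators on a small ball $\overline X_{rn}(Q_T)$, with the exponent condition \eqref{1.13} entering exactly through Gagliardo--Nirenberg plus H\"older in $t$ (this is inequality \eqref{2.2} in the paper) and with $r,\delta$ chosen as in \eqref{3.10}, \eqref{3.11}. Two points, however, are genuine gaps. First, the reduction by a lifting $\psi$ and the energy-identity linear theory are not available at the stated regularity: the boundary data are only in $H^{(l-j)/(2l+1)}(0,T)$, so a lifting whose time derivative and $(2l+1)$ space derivatives land in $L_1(0,T;L_2(I))$ does not exist in general, and the boundary term at $x=0$ produced by multiplying by $u_i$ controls only the trace $\partial_x^l u_i(t,0)$, not the interior $L_2(0,T;H^l(I))$ norm. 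The interior smoothing comes from the weighted identity (multiplication by $(1+x)u_i$, cf.\ \eqref{2.7}), and the nonhomogeneous boundary data in the $\mathcal B$-spaces are handled in the paper by the boundary-potential linear theory of Lemma~\ref{L2.1} (from [FL]) rather than by a lifting; your sketch of the linear estimate \eqref{2.5} would not close as written.

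Second, and more importantly, your route to uniqueness in the whole space $(X(Q_T))^n$ --- ``splitting $[0,T]$ into short subintervals and iterating the difference estimate'' --- fails precisely under the critical condition \eqref{1.13}. When $b_2(j,k,m)=(4l-2j-2k)/(2m+1)$ the nonlinear estimate \eqref{3.3} gains no positive power of $T$ (this is why Theorem~\ref{T1.2} needs the strict inequality \eqref{1.16} to produce the factor $T^\sigma$), so shrinking the time step does not make the nonlinearity small; moreover on later subintervals the competing solution need not lie in the small ball where the contraction gives uniqueness, and its size cannot be assumed small since only the data are small. The paper closes this gap by a separate argument, Theorem~\ref{T3.1}: for two arbitrary solutions bounded by $K$ one subtracts a linear flow $w$ carrying the data differences, applies the weighted energy inequality \eqref{2.7} to $v=u-\widetilde u-w$, absorbs the nonlinear differences via the interpolation estimates \eqref{3.24}--\eqref{3.27} into an integrable Gronwall weight $\gamma(\tau)$ with $\|\gamma\|_{L_1(0,T)}\le c(T,K)$, and concludes both uniqueness in all of $(X(Q_T))^n$ and the Lipschitz dependence \eqref{3.13}. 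Some such a priori difference estimate (not a rerun of the fixed-point bound) is needed to obtain the uniqueness and stability statements actually claimed in Theorem~\ref{T1.1}.
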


\begin{theorem}\label{T1.2}
Let the hypothesis of Theorem~\ref{T1.1} be satisfied except inequalities \eqref{1.13} which are substituted by the following ones:
\begin{equation}\label{1.16}
b_2(j,k,m) < \frac{4l-2j-2k}{2m+1} \quad \forall\ j,k,m.
\end{equation}
Let $c_0$ is given by formula \eqref{1.14}. Then for the fixed $\delta>0$ there exists $T_0>0$ such that if $c_0 \leq \delta$ and $T\in (0, T_0]$ there exists a unique weak solution of problem \eqref{1.1}--\eqref{1.3} $u\in \bigl(X(Q_T)\bigr)^n$. Moreover, the map \eqref{1.15} is Lipschitz continuous on the closed ball of the radius $\delta$ similarly to Theorem~\ref{T1.1}.
\end{theorem}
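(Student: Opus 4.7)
The plan is to repeat the Banach contraction scheme underlying the proof of Theorem~\ref{T1.1}, but to redistribute the roles of the parameters: instead of tying the radius $r$ of the invariant ball to the smallness of the data $c_0$, I fix $r$ proportional to the prescribed bound $\delta$ and extract smallness from $T$ itself. This is possible precisely because the strict inequality \eqref{1.16} leaves a positive margin in the interpolation inequalities used to control the nonlinear part, so that every nonlinear contribution picks up a factor $T^{\sigma}$ with some $\sigma>0$.

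More concretely, I would first invoke the linear estimate that is already established for the proof of Theorem~\ref{T1.1}: the linearization of \eqref{1.1}--\eqref{1.3} (with $g_j\equiv 0$) is uniquely solvable in $(X(Q_T))^n$ and the associated solution operator has norm bounded by a constant $c_{\mathrm{lin}}$ independent of $T\in(0,1]$. I then set $r:=2c_{\mathrm{lin}}\delta$ and define the map $\Lambda:\overline{X}_{rn}(Q_T)\to (X(Q_T))^n$ sending $v$ to the unique weak solution of the linear problem with data $(u_0,\mu_j,\nu_j)$ and right-hand side $f-\sum_{j=0}^{l}(-1)^j\partial_x^j g_j(t,x,v,\dots,\partial_x^{l-1}v)$. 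Weak solutions of \eqref{1.1}--\eqref{1.3} are exactly the fixed points of $\Lambda$.

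The heart of the argument is the nonlinear estimate. Using \eqref{1.10}--\eqref{1.11} and the mean value theorem, each contribution of $g_j$ to the source term is bounded, after Gagliardo--Nirenberg-type interpolation in $x$ (controlling $\partial_x^m v$ in an appropriate $L_p(I)$ by $\|v(t,\cdot)\|_{L_2(I)}^{1-\theta}\|\partial_x^l v(t,\cdot)\|_{L_2(I)}^{\theta}$) followed by H\"older in $t$, by a finite sum of expressions of the form $C(r)\,T^{\sigma(j,k,m)}\,\|v\|^{q(j,k,m)}_{(X(Q_T))^n}$, where $\sigma(j,k,m)=0$ in the borderline case $b_2(j,k,m)=(4l-2j-2k)/(2m+1)$ of \eqref{1.13} and $\sigma(j,k,m)>0$ precisely under the strict inequality \eqref{1.16}. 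Let $\sigma_0>0$ be the minimum over the finitely many indices $(j,k,m)$ and put $C_*(r):=c_{\mathrm{lin}}C(r)$.

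Combining these pieces, for $v,v_1,v_2\in \overline{X}_{rn}(Q_T)$ I would obtain
\begin{equation*}
\|\Lambda v\|_{(X(Q_T))^n}\le c_{\mathrm{lin}}\delta+C_*(r)\,T^{\sigma_0}\,r,\qquad \|\Lambda v_1-\Lambda v_2\|_{(X(Q_T))^n}\le C_*(r)\,T^{\sigma_0}\,\|v_1-v_2\|_{(X(Q_T))^n}.
\end{equation*}
Choosing $T_0\in(0,1]$ so small that $C_*(r)T_0^{\sigma_0}\le 1/2$ then makes $\Lambda$ a contraction of $\overline{X}_{rn}(Q_T)$ into itself for every $T\in(0,T_0]$, and the Banach fixed-point theorem delivers the desired unique weak solution; uniqueness in all of $(X(Q_T))^n$ and Lipschitz continuity of the map \eqref{1.15} follow at once from the same difference estimate applied to two solutions or to two sets of data. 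The main technical obstacle is the careful bookkeeping of the interpolation exponents that converts \eqref{1.16} into a quantitative $T^{\sigma_0}$ gain; once this is in place, the contraction mechanism is essentially identical to the one used for Theorem~\ref{T1.1}.
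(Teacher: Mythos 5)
Your existence argument is essentially the paper's own: the solution is built as a fixed point of the same map as in Theorem~\ref{T1.1}, with the strict inequalities \eqref{1.16} converted, via the interpolation inequality \eqref{2.2}, into a factor $T^{\sigma}$ (the paper's $\sigma$ in \eqref{3.29} is exactly your $\sigma_0$), and $T_0$ chosen so that $c(T)T^{\sigma}$ beats the powers of $r\sim\delta$; this part of your proposal is sound and matches \eqref{3.30}--\eqref{3.34}.

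The gap is in your last sentence. You claim that uniqueness in all of $\bigl(X(Q_T)\bigr)^n$ and the Lipschitz continuity of the map \eqref{1.15} ``follow at once from the same difference estimate,'' but that estimate is only available for $v_1,v_2$ in the ball $\overline X_{rn}(Q_T)$ with $r=2c_{\mathrm{lin}}\delta$. A second weak solution of \eqref{1.1}--\eqref{1.3} is not known a priori to lie in this small ball, so the contraction inequality says nothing about it, and the theorem asserts uniqueness in the whole space. The paper closes this gap by two separate devices: for uniqueness it notes that, since the smallness now comes from $T^{\sigma}$ rather than from the data, for an arbitrarily large radius $r$ one can shrink $T_0$ so that any solution with norm at most $r$ is the unique fixed point of $\Theta$ in $\overline X_{rn}(Q_{T_0})$ (with an implicit step-by-step continuation up to the given $T$); and for Lipschitz dependence it relies on the stability result of Theorem~\ref{T3.1}, proved by an energy/Gronwall argument on the difference of two arbitrary bounded solutions --- which applies here verbatim because \eqref{1.16} implies \eqref{1.13}. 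You need one of these two mechanisms (enlarging $r$ and re-shrinking $T$, or a stability estimate of Theorem~\ref{T3.1} type valid for all bounded solutions); once whole-space uniqueness is secured, your contraction estimate combined with the linearity of the solution operator in the data does give the Lipschitz continuity on the $\delta$-ball, so the rest of your argument stands.
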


For the inverse problem the results are the following.

\begin{theorem}\label{T1.3}
Let the coefficients $a_j$, $j=0,\dots,2l+1$, satisfy conditions \eqref{1.8}, \eqref{1.9} and the functions $g_j$ satisfy conditions \eqref{1.10}, \eqref{1.11}, \eqref{1.13}.
Let $u_0 \in \bigl(L_2(I)\bigr)^n$, 
$(\mu_0,\dots,\mu_{l-1}) \in \bigl(\mathcal B^{l-1}(0,T)\bigr)^n$, 
$(\nu_0,\dots,\nu_l) \in \bigl(\mathcal B^l(0,T)\bigr)^n$, 
$h_0 = (h_{01},\dots,h_{0n})^T \in \bigl(L_1(0,T;L_2(I))\bigr)^n$ for an arbitrary $T>0$.
Assume that condition \eqref{1.6} holds and for any $i=1,\dots,n$, satisfying $m_i>0$, for $k=1,\dots m_i$ the functions $\omega_{ki}$ satisfy condition \eqref{1.12};
$\varphi_{k i} \in W_1^1(0,T)$ and
\begin{equation}\label{1.17}
\varphi_{k i}(0) = \int_I u_{0 i}(x) \omega_{k i}(x) \,dx;
\end{equation}
$h_{ki} \in C([0,T]; L_2(I))$ for $k=1,\dots,m_i$. Let
\begin{equation}\label{1.18}
\psi_{k j i}(t) \equiv \int_I h_{j i}(t,x) \omega_{k i}(x)\, dx,\quad k,j = 1,\dots,m_i,
\end{equation}
and assume that
\begin{equation}\label{1.19}
\Delta_i (t) \equiv \det\bigl(\psi_{k j i}(t)\bigr) \ne 0 \quad \forall \ t\in [0,T].
\end{equation}
Denote
\begin{multline}\label{1.20}
c_0 = \|u_0\|_{(L_2(I))^n} + 
\|(\mu_0,\dots,\mu_{l-1})\|_{(\mathcal B^{l-1}(0,T))^n} + 
\|(\nu_0,\dots,\nu_l)\|_{(\mathcal B^l(0,T))^n} \\+
\|h_0\|_{(L_1(0,T;L_2(I)))^n} +
\sum\limits_{i: m_i>0} \sum\limits_{k=1}^{m_i} \|\varphi_{ki}'\|_{L_1(0,T)}.
\end{multline}
Then there exists $\delta>0$ such that under the assumption $c_0\leq \delta$ there exist  functions $F_{ki}\in L_1(0,T)$, $i: m_i>0$, $k=1,\dots,m_i$, and the corresponding solution of problem \eqref{1.1}--\eqref{1.3} $u\in \bigl(X(Q_T)\bigr)^n$ verifying \eqref{1.5}, where the function $f$ is given by formula \eqref{1.4}. Moreover, there exists $r>0$ such that this solution $u$ is unique in the ball $\overline X_{rn}(Q_T)$ with the corresponding unique functions $F_{ki}\in L_1(0,T)$ and the map
\begin{equation}\label{1.21}
\bigl(u_0, (\mu_0,\dots,\mu_{l-1}), (\nu_0,\dots,\nu_l), h_0, \{\varphi_{ki}'\} \bigr) \to (u, \{F_{ki}\})
\end{equation}
is Lipschitz continuous on the closed ball of the radius $\delta$ in the space $\bigl(L_2(I)\bigl)^n \times \bigl(\mathcal B^{l-1}(0,T)\bigr)^n \times \bigl(\mathcal B^l(0,T)\bigr)^n  \times \bigl(L_1(0,T;L_2(I))\bigr)^n \times  \bigl(L_1(0,T)\bigr)^M$ into the space $\bigl(X(Q_T)\bigr)^n \times \bigl(L_1(0,T)\bigr)^M$.
\end{theorem}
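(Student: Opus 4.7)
The plan is a Banach fixed-point argument on the ball $\overline X_{rn}(Q_T)$ for suitably small $r$ and $c_0$. Given $u\in\overline X_{rn}(Q_T)$, I first define controls $F_{ki}=F_{ki}[u]$ algebraically from the overdetermination data, then solve the direct problem \eqref{1.1}--\eqref{1.3} with $f$ defined by \eqref{1.4} via Theorem~\ref{T1.1} to produce $\Lambda(u)$. A fixed point of $\Lambda$ will yield the solution of the inverse problem.

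To recover $F_{ki}[u]$, I differentiate \eqref{1.5} in $t$, substitute $(u_i)_t$ from the $i$-th equation of \eqref{1.1}, and integrate by parts in $x$ up to $2l+1$ times. Conditions \eqref{1.12} on $\omega_{ki}$---namely $\omega^{(m)}(0)=0$ for $m\leq l$ and $\omega^{(m)}(R)=0$ for $m\leq l-1$---are precisely what is required to kill the boundary terms that would otherwise involve unknown high-order traces $\partial_x^m u_i$ at $x=0,R$ for $m\geq l$; the remaining boundary contributions reduce to the prescribed $\mu_j,\nu_j$, $j\leq l$. This yields, for each $k=1,\dots,m_i$, the identity
\[
\varphi_{ki}'(t) \;=\; \Phi_{ki}(t;u) \;+\; \sum_{j=1}^{m_i} \psi_{kji}(t)\, F_{ji}(t),
\]
where $\Phi_{ki}$ is a functional of $u$ assembling the linear and nonlinear parts of the equation together with the data $u_0,\mu_j,\nu_j,h_{0i},a_j,g_j$. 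By \eqref{1.19} and continuity of $\Delta_i$ on $[0,T]$, $\Delta_i^{-1}$ is uniformly bounded, so Cramer's rule inverts this system and furnishes $F_{ji}[u]\in L_1(0,T)$ with $\|F_{ji}[u]\|_{L_1(0,T)} \leq C_0\bigl(c_0+\mathcal N(u)\bigr)$, where $\mathcal N(u)$ collects the nonlinear contributions from the $g_j$.

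With $F_{ki}[u]$ in hand, the right-hand side of \eqref{1.1} has $\bigl(L_1(0,T;L_2(I))\bigr)^n$-norm bounded by $c_0+C\|F[u]\|_{L_1(0,T)}\cdot\max_{k,i}\|h_{ki}\|_{C([0,T];L_2(I))}$, which for $\delta, r$ small stays beneath the smallness threshold of Theorem~\ref{T1.1}. Hence $\Lambda(u)$ is well-defined and $\|\Lambda(u)\|_{(X(Q_T))^n}\leq C_1\bigl(c_0+\mathcal N(u)\bigr)$; tuning $r$ and $c_0$ arranges $\Lambda:\overline X_{rn}(Q_T)\to\overline X_{rn}(Q_T)$. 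Lipschitz continuity of $\Lambda$ follows by combining the Lipschitz bound from Theorem~\ref{T1.1} with the Lipschitz dependence of $F_{ji}[u]$ on $u$ (a consequence of \eqref{1.11} and the same Sobolev embeddings used for Theorem~\ref{T1.1}); the contraction constant is $o(1)$ as $r,c_0\to 0$. Banach's fixed-point theorem then produces a unique $u^*\in\overline X_{rn}(Q_T)$ and uniquely determined $F_{ki}^*\in L_1(0,T)$, while compatibility \eqref{1.17} promotes $\tfrac{d}{dt}\bigl(\int_I u^*_i\omega_{ki} - \varphi_{ki}\bigr)\equiv 0$, valid by construction, to the full overdetermination \eqref{1.5}. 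Lipschitz continuity of the map \eqref{1.21} follows from the same estimates applied to two sets of data.

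The main obstacle will be the estimate on $\mathcal N(u)$: after integration by parts the nonlinear contributions take the form $\iint_{Q_T} g_{ji}(t,x,u,\dots,\partial_x^{l-1}u)\,\omega_{ki}^{(j)}(x)\,dx\,dt$ and must be bounded by $\|u\|_{(X(Q_T))^n}^{1+\beta}$ for some $\beta>0$ to furnish the required smallness. These are essentially the same Sobolev-interpolation estimates that drive the proof of Theorem~\ref{T1.1} under \eqref{1.10}, \eqref{1.11}, \eqref{1.13}; in fact the extra regularity of $\omega_{ki}$ from \eqref{1.12} makes the bounds here somewhat easier than in the direct problem. Granted these estimates, assembling the pieces in the order above proves Theorem~\ref{T1.3}.
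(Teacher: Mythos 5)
Your overall architecture (fixed point on a small ball, recovery of $F$ by differentiating \eqref{1.5}, substituting $u_t$ from the equation, integrating by parts using \eqref{1.12}, and inverting via Cramer's rule and \eqref{1.19}) is in the spirit of the paper, but there is a genuine gap in how you treat the terms of $\Phi_{ki}(t;u)$ that are \emph{linear} in $u$. After the integration by parts, $\Phi_{ki}$ contains, besides data and nonlinear contributions, terms of the form $\int_I u_i\,(a_{(2l+1)i}\omega_{ki}^{(2l+1)}-a_{(2l)i}\omega_{ki}^{(2l)})\,dx$ and $\int_I u_m\,[(a_{(2j+1)im}\omega_{ki}^{(j)})^{(j+1)}-(a_{(2j)im}\omega_{ki}^{(j)})^{(j)}]\,dx$ (the quantity $\widetilde r(t;u_i,\omega_{ki})$ in the paper). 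These are linear in $u$ with a coefficient of size $c(T,\omega,\psi_0,\Delta_0^{-1},\{\|h_{ki}\|\})$ that does \emph{not} shrink as $r$ or $c_0$ tend to zero. Consequently your bound $\|F_{ji}[u]\|_{L_1}\leq C_0(c_0+\mathcal N(u))$ omits a term $C(T)\|u\|_{(X(Q_T))^n}$, and, more importantly, the composite map $u\mapsto F[u]\mapsto \Lambda(u)$ picks up a Lipschitz constant bounded below by this fixed $C(T)$, so your claim that the contraction constant is $o(1)$ as $r,c_0\to 0$ is false for an arbitrary fixed $T$. Your scheme as written would only give a small-time result (essentially Theorem~\ref{T1.4}, part 1), not Theorem~\ref{T1.3}, which asserts solvability for arbitrary $T$ under small data.

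The paper circumvents exactly this obstruction by a different decomposition: in Lemma~\ref{L4.1} the \emph{full linear} inverse problem (including the feedback of $u$ through $\widetilde r$) is solved exactly for every $T$, by exploiting the Volterra structure ($u(t)=(S_0\circ H)F$ depends only on $F$ on $[0,t]$) and running a contraction in the exponentially weighted norm $\|e^{-\gamma t}\cdot\|_{(L_1(0,T))^M}$ with $\gamma$ large; the Banach inverse operator theorem then gives a bounded inverse $\Gamma$ of the input--output map $\Lambda=Q\circ S_0\circ H$. Only after this is the nonlinear fixed point set up (formulas \eqref{4.15}--\eqref{4.16}), and there all $u$-dependent terms entering the map are genuinely superlinear (powers $b_1+1,b_2+1>1$ via \eqref{3.5}, \eqref{3.8}), so smallness of $r$ and $\delta$ does yield a contraction for arbitrary $T$. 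To repair your argument you would either have to absorb the linear feedback into an exactly solved linear inverse problem as the paper does, or carry a weighted-in-time norm through the entire nonlinear iteration, neither of which your proposal addresses. A secondary, more minor point: the differentiation of \eqref{1.5} and the $2l+1$ integrations by parts must be justified for weak solutions in $(X(Q_T))^n$ (which only lie in $L_2(0,T;H^l)$); the paper does this through the weak formulation with test functions $\psi(t)\omega_{ki}(x)$ in Lemma~\ref{L2.3}, a step you should not treat as purely formal.
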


\begin{theorem}\label{T1.4}
Let the hypothesis of Theorem~\ref{T1.3} be satisfied except inequalities \eqref{1.13} which are substituted by inequalities \eqref{1.16}.
Let $c_0$ be given by formula \eqref{1.20}. Then two assertions are valid.

1. For the fixed $\delta>0$ there exists $T_0>0$ such that if $c_0\leq \delta$ and $T\in (0,T_0]$, there exist unique functions $F_{ki}\in L_1(0,T)$, $i: m_i>0$, $k=1,\dots,m_i$, and the corresponding unique solution of problem \eqref{1.1}--\eqref{1.3} $u\in \bigl(X(Q_T)\bigr)^n$ verifying \eqref{1.5}, where the function $f$ is given by formula \eqref{1.4}.

2. For the fixed arbitrary $T>0$ there exists $\delta>0$ such that under the assumption $c_0\leq \delta$ there exist unique functions $F_{ki}\in L_1(0,T)$, $i: m_i>0$, $k=1,\dots,m_i$, and the corresponding unique solution of problem \eqref{1.1}--\eqref{1.3} $u\in \bigl(X(Q_T)\bigr)^n$ verifying \eqref{1.5}, where the function $f$ is given by formula \eqref{1.4}.

Moreover, the map \eqref{1.21} is Lipschitz continuous on the closed ball of the radius $\delta$ similarly to Theorem \ref{T1.3}.
\end{theorem}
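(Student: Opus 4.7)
My approach would be to mirror the fixed-point scheme used for Theorem~\ref{T1.3} and exploit the strict inequality \eqref{1.16} to obtain a $T^\varepsilon$ gain that is not available under \eqref{1.13}. For a given $v\in\overline X_{rn}(Q_T)$, I would associate controls $F_{ki}[v]\in L_1(0,T)$ and define $\Lambda(v)=u$ as the weak solution of the direct problem \eqref{1.1}--\eqref{1.3} with right-hand side
$$
f_i(t,x)=h_{0i}(t,x)+\sum_{k=1}^{m_i}F_{ki}[v](t)\,h_{ki}(t,x),
$$
whose existence is guaranteed by Theorem~\ref{T1.2} in part~1 and by Theorem~\ref{T1.1} in part~2. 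A fixed point of $\Lambda$ in $\overline X_{rn}(Q_T)$ is precisely a solution of the inverse problem.

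To produce the controls I would substitute test functions of the form $\phi(t,x)=\eta(t)\omega_{ki}(x)\,e_i$ into the integral identity \eqref{1.7}; thanks to the vanishing conditions \eqref{1.12} on $\omega_{ki}$ the admissible boundary terms are exactly those involving $\nu_l$ and the lower-order traces already specified in \eqref{1.3}. Differentiating \eqref{1.5} in $t$, using \eqref{1.4}, and equating with the resulting identity yields, for each index $i$ with $m_i>0$, the linear algebraic system
$$
\sum_{j=1}^{m_i}\psi_{kji}(t)\,F_{ji}(t)=\varphi_{ki}'(t)-\Phi_{ki}[v](t),\qquad k=1,\dots,m_i,
$$
in which $\Phi_{ki}[v]$ gathers the linear, nonlinear and boundary contributions of \eqref{1.7} evaluated at $v$. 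Condition \eqref{1.19} permits pointwise-in-$t$ inversion of this system, delivering $F_{ki}[v]\in L_1(0,T)$ with an estimate controlled by $\|\varphi_{ki}'\|_{L_1(0,T)}$, by $c_0$, and by a polynomial in $r$ vanishing at $r=0$ that comes from \eqref{1.11}.

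The core effort is the contractive step. Combining the direct-problem estimates from Theorems~\ref{T1.1}--\ref{T1.2}, the Lipschitz version of the control bound above, and the nonlinear estimate
$$
\bigl\|g_j(\cdot,v^{(1)},\dots)-g_j(\cdot,v^{(2)},\dots)\bigr\|_{L_1(0,T;L_2(I))}\le C\,T^{\varepsilon}\bigl(\|v^{(1)}\|+\|v^{(2)}\|\bigr)^{\sigma}\bigl\|v^{(1)}-v^{(2)}\bigr\|_{(X(Q_T))^n}
$$
that follows from \eqref{1.11}, \eqref{1.16} and Gagliardo--Nirenberg interpolation (this is precisely the place where the strict inequality buys an extra power of $T$), I would first fix $r$ and then choose either $T_0$ small in part~1 or $\delta$ small in part~2 to render $\Lambda$ a strict contraction on $\overline X_{rn}(Q_T)$. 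Banach's theorem yields a unique fixed point in the ball, and Lipschitz continuity of the map \eqref{1.21} follows by rerunning the same difference estimates with varying input data.

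I expect the main obstacle to be the improvement of uniqueness from inside the ball $\overline X_{rn}(Q_T)$ (as in Theorem~\ref{T1.3}) to the whole space $\bigl(X(Q_T)\bigr)^n$. For two solutions $u^{(1)},u^{(2)}$ with associated controls $F^{(1)},F^{(2)}$, subtracting the identities and applying the above interpolation on a subinterval $[0,\tau]$ leads to
$$
\bigl\|u^{(1)}-u^{(2)}\bigr\|_{(X(Q_\tau))^n}\le C(\tau)\bigl\|u^{(1)}-u^{(2)}\bigr\|_{(X(Q_\tau))^n},
$$
with $C(\tau)\to 0$ as $\tau\to 0$ by virtue of the strictness in \eqref{1.16}; a bootstrap on successive small intervals then forces $u^{(1)}\equiv u^{(2)}$ on the whole of $[0,T]$, after which the invertibility encoded in \eqref{1.19} propagates the identity to the controls $F_{ki}$. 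The delicate point is that on later subintervals the $X$-norms of $u^{(j)}$ need not be small, so I would rely purely on the $T^\varepsilon$-type factor rather than on smallness of the solutions to close the iteration.
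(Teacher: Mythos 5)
Your fixed--point scheme works for assertion 1, but as written it does not reach assertion 2, and the reason is precisely the ingredient of the paper that you bypass. In your map the control $F[v]$ is obtained by Cramer's rule from the system $\sum_j\psi_{kji}F_{ji}=\varphi'_{ki}-\Phi_{ki}[v]$, where $\Phi_{ki}[v]$ contains, besides the nonlinear terms, the \emph{linear} Volterra-type terms of \eqref{2.17}, i.e. integrals of the form $\int_I v_m\bigl(a_{(2l+1)i}\omega_{ki}^{(2l+1)}-a_{(2l)i}\omega_{ki}^{(2l)}\bigr)dx$ and their lower-order analogues. Consequently the Lipschitz constant of $v\mapsto F[v]$ in $L_1(0,T)$ contains a term of size $c\,T\max_k\|h_{ki}\|_{C([0,T];L_2(I))}/\Delta_0$ which is linear in $v_1-v_2$ and therefore does not shrink when $r$ or $\delta$ is decreased. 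Your prescription for assertion 2 --- fix an arbitrary $T$ and take $\delta$ small --- cannot make the resulting contraction constant less than one, because only the superlinear (nonlinear) contributions are damped by smallness of the data. This is exactly why the paper first solves the \emph{full linear} inverse problem once and for all: Lemma~\ref{L4.1} inverts the operator $\Lambda_{ki}=Q(\omega_{ki})\circ S_0\circ H$ for arbitrary $T$ by a contraction in the exponentially weighted norm $\|e^{-\gamma t}\cdot\|_{(L_1(0,T))^M}$, producing the bounded operator $\Gamma=\Lambda^{-1}$, and then (Theorem~\ref{T4.1}) the nonlinear iteration \eqref{4.15}--\eqref{4.16} freezes only $g_j(t,x,v,\dots)$; the difference identity \eqref{4.18} shows the contraction constant is $c(T)T^{\sigma}\bigl(r^{b_1}+r^{b_2}\bigr)$, which can be beaten either by $T\le T_0$ (assertion 1) or by $\delta$, hence $r$, small at fixed $T$ (assertion 2). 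To repair your argument you must either absorb the linear feedback of the state into the control exactly as in Lemma~\ref{L4.1}, or run your whole fixed point in an $e^{-\gamma t}$-weighted version of $X(Q_T)$; smallness of $\delta$ alone will not do.

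Two further, more minor points. For the divergence-form nonlinearities $\partial_x^j g_j$ with $j\ge 1$ the relevant norm is $L_{2l/(2l-j)}(0,T;L_2(I))$ (the spaces in which the operators $\widetilde S_j$ and estimate \eqref{2.2} are formulated), not $L_1(0,T;L_2(I))$ as in your displayed estimate; this is what produces the exponent $\sigma$ of \eqref{3.29}. As for uniqueness, your bootstrap over small subintervals is a workable alternative for assertion 1, and is in the spirit of the paper's remark at the end of the proof of Theorem~\ref{T1.2} (for large $r$ shrink the time step); but for assertion 2 uniqueness again comes for free from the paper's structure, since with $\Gamma$ in place the map $\Theta$ is a contraction on $\overline X_{rn}(Q_T)$ for the fixed $T$ and the control is uniquely determined from the solution by Theorem~\ref{T4.1}, whereas your scheme would still be missing the corresponding global-in-$T$ statement.
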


\begin{remark}\label{R1.1}
Theorems~\ref{T1.2} and~\ref{T1.4} are verified for the aforementioned Majda--Biello system. In the case of such a system with more general nonlinearities
$$
\begin{cases}
& u_t + u_{xxx} + \bigl(g_1(u,v)\bigr)_x =f_1,\\
& v_t + \alpha v_{xxx} +\bigl(g_2(u,v)\bigr)_x =f_2,\quad \alpha>0,
\end{cases}
$$
Theorems~\ref{T1.1} and~\ref{T1.3} are verified if 
$$
|\partial_{y_k} g_j(y_1,y_2)| \leq c\bigl(|y_1|^{b_1} + |y_2|^{b_1} + |y_1|^{b_2} + |y_2|^{b_2}\bigr),\quad k,j =1,2,
$$
where $0< b_1 \leq b_2 \leq 2$, for example, if $g_1(y_1,y_2) = c y_2^3$, $g_2(y_1,y_2) = c_1 y_1^2 y_2 + c_2 y_1 y_2^2$.
\end{remark}

The paper is organized as follows. Section~\ref{S2} contains certain auxiliary results on the corresponding linear initial-boundary value problem and interpolating inequalities. Section~\ref{S3} is devoted to the direct problem, Section~\ref{S4} --- to the inverse one.

\section{Preliminaries}\label{S2}

Further we use the following interpolating inequality (see, for example, \cite{BIN}): there exists a constant $c=c(R,l,p)$ such that for any $\varphi \in H^l(I)$, integer $m\in [0,l)$ and $p\in [2,+\infty]$
\begin{equation}\label{2.1}
\|\varphi^{(m)}\|_{L_p(I)} \leq c\|\varphi^{(l)}\|_{L_2(I)}^{2s} \|\varphi\|_{L_2(I)}^{1-2s} +c\|\varphi\|_{L_2(I)}, \quad s= s(p,l,m) =\frac{2m+1}{4l} - \frac1{2lp}.
\end{equation}

On the basis of \eqref{2.1} in \cite[Lemma~3.3]{F23} the following inequality was proved: let $j\in [0,l]$, $k,m \in [0,l-1]$, $b\in (0, (4l-2j-2k)/(2m+1)]$, then for any functions $v,w \in X(Q_T)$
\begin{multline}\label{2.2}
\bigl\| |\partial_x^m v|^b \partial_x^k w \bigr\|_{L_{2l/(2l-j)}(0,T;L_2(I))}  \\ \leq c\bigl( T^{((4l-2j-2k) - (2m+1)b)/(4l)}  + T^{(2l-j)/(2l)} \bigr) \|v\|^b_{X(Q_T)} \|w\|_{X(Q_T)}.
\end{multline}

Besides nonlinear system \eqref{1.1} consider its linear analogue
\begin{multline}\label{2.3}
u_t - (-1)^l(a_{2l+1}\partial_x^{2l+1} u + a_{2l} \partial_x^{2l} u) -
\sum\limits_{j=0}^{l-1} (-1)^j \partial_x^j\bigl[a_{2j+1}(t,x) \partial_x^{j+1} u + 
a_{2j}(t,x) \partial_x^{j} u\bigr] \\
 =   f(t,x) +\sum\limits_{j=0}^l  (-1)^j \partial_x^j G_j(t,x),
\end{multline}
$G_j =(G_{j1},\dots,G_{jn})^T$.
The notion of a weak solution to the corresponding initial-boundary value problem is similar to Definition~\ref{D1.1}. In particular, the corresponding integral identity (for the same test functions as in Definition~\ref{D1.1}) is written as follows:
\begin{multline}\label{2.4}
\iint_{Q_T} \Bigl[ (u,\phi_t) - (a_{2l+1 }\partial_x^l u, \partial_x^{l+1}\phi) + 
(a_{2l} \partial_x^l u, \partial_x^l\phi)  \\+
\sum\limits_{j=0}^{l-1} \bigl((a_{2j+1}\partial_x^{j+1}u +a_{2j}\partial_x^j u), \partial_x^j\phi \bigr)  + \bigl(f(t,x),\phi\bigr) +
\sum\limits_{j=0}^l \bigl(G_j(t,x), \partial_x^j\phi\bigr) \Bigr]\,dxdt  \\+
\int_I (u_0, \phi\big|_{t=0}) \,dx + 
\int_0^T (a_{2l+1}\nu_l, \partial_x^l\phi\big|_{x=R}) \,dt =0,
\end{multline}

First consider the case $a_j \equiv 0$ for $j\leq 2l-1$. Then system \eqref{2.3} is obviously splitted into the set of separate equations and we can use the corresponding results from \cite{FL} and \cite{F21} for single equations. 

\begin{lemma}\label{L2.1}
Let the coefficients $a_{2l+1}$ and $a_{2l}$ satisfy condition \eqref{1.8}, $a_j\equiv 0$ for $j\leq 2l-1$, $u_0\in \bigl(L_2(I)\bigr)^n$, $(\mu_0,\dots,\mu_{l-1}) \in \bigl(\mathcal B^{l-1}(0,T)\bigr)^n$, $(\nu_0,\dots,\nu_l) \in \bigl(\mathcal B^l(0,T)\bigr)^n$, $f = G_j \equiv 0$ $\forall j$. Then there exists a unique solution of problem \eqref{2.3}, \eqref{1.2}, \eqref{1.3}
$u\in \bigl(X(Q_T)\bigr)^n$ and for any $t \in (0,T]$
\begin{multline}\label{2.5}
\|u\|_{(X(Q_{t}))^n} \leq c(T)\Bigl[ \|u_0\|_{(L_2(I))^n} + \|(\mu_0,\dots,\mu_{l-1})\|_{(\mathcal B^{l-1}(0,t))^n} \\+ \|(\nu_0,\dots,\nu_l)\|_{(\mathcal B^l(0,t))^n} \Bigr].
\end{multline}
\end{lemma}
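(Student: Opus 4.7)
\smallskip
The plan is to reduce Lemma~\ref{L2.1} to the corresponding scalar result by decoupling the system. Under the hypotheses $a_j\equiv0$ for $j\le 2l-1$ and the diagonal structure $a_{2l+1}=\diag(a_{(2l+1)i})$, $a_{2l}=\diag(a_{(2l)i})$, equation \eqref{2.3} splits componentwise into $n$ independent scalar odd-order equations
$$
\partial_t u_i - (-1)^l\bigl(a_{(2l+1)i}\partial_x^{2l+1}u_i + a_{(2l)i}\partial_x^{2l}u_i\bigr) = 0,\qquad i=1,\dots,n,
$$
with initial data $u_i(0,\cdot)=u_{0i}$ and boundary data $\partial_x^j u_i(t,0)=\mu_{ji}(t)$, $j=0,\dots,l-1$, $\partial_x^j u_i(t,R)=\nu_{ji}(t)$, $j=0,\dots,l$.

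First I would check that the vector weak-solution identity \eqref{2.4} is equivalent to the set of $n$ scalar weak-solution identities: plugging in a test function $\phi$ whose only nonzero component is the $i$-th isolates the $i$-th scalar identity (the diagonal structure makes all cross-terms vanish), and conversely, if each $u_i$ is a scalar weak solution then \eqref{2.4} holds for arbitrary admissible vector test functions by linearity. The boundary trace conditions at $x=0,R$ of orders $\le l-1$ transfer componentwise in the same way, and the boundary contribution $\int_0^T(a_{2l+1}\nu_l,\partial_x^l\phi|_{x=R})\,dt$ distributes into the scalar terms $\int_0^T a_{(2l+1)i}\nu_{li}\partial_x^l\phi_i|_{x=R}\,dt$ encoding the order-$l$ Dirichlet datum at $x=R$ in the scalar formulation.

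Next I would apply the scalar existence-uniqueness result for the linear odd-order equation with these boundary data, which is \cite[results for the homogeneous linear problem]{FL, F21}; condition \eqref{1.8} gives $a_{(2l+1)i}>0$ and $a_{(2l)i}\le 0$, which is exactly the hypothesis of that scalar theory. It produces a unique $u_i\in X(Q_T)$ together with the estimate, valid for every $t\in(0,T]$,
$$
\|u_i\|_{X(Q_t)} \le c(T)\Bigl[\|u_{0i}\|_{L_2(I)} + \|(\mu_{0i},\dots,\mu_{(l-1)i})\|_{\mathcal B^{l-1}(0,t)} + \|(\nu_{0i},\dots,\nu_{li})\|_{\mathcal B^l(0,t)}\Bigr].
$$
Summing these $n$ inequalities and using the definition of $\|\cdot\|_{(X(Q_T))^n}$ as a sum over components yields \eqref{2.5}; uniqueness follows because the vector problem forces each component to coincide with the unique scalar solution.

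The only ``obstacle'' is really a bookkeeping one: making sure the scalar weak-solution framework in \cite{FL, F21} matches the one induced here after componentwise restriction (in particular, the convention about which boundary traces are prescribed and how the $\nu_l$ term appears in the integral identity). Once that is verified, no new analysis is needed — all the quantitative content is already in the cited scalar linear theory.
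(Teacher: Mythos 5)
Your proposal matches the paper's argument: the paper notes that with $a_j\equiv 0$ for $j\le 2l-1$ and diagonal leading coefficients the system splits into independent scalar equations, and then simply invokes the scalar linear theory (Lemma~4.3 of \cite{FL}) componentwise, exactly as you do. Your extra verification that the vector weak formulation decouples into the scalar ones is just a more explicit version of what the paper dismisses as obvious, so the two proofs are essentially identical.
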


\begin{proof}
This assertion succeeds from \cite[Lemma~4.3]{FL}.
\end{proof}

\begin{lemma}\label{L2.2}
Let the coefficients $a_{2l+1}$ and $a_{2l}$ satisfy condition \eqref{1.8}, $a_j\equiv 0$ for $j\leq 2l-1$,  $u_0 \equiv 0$, $\mu_j \equiv 0$ for $j=0,\dots,l-1$, $\nu_j\equiv 0$ for $j=0,\dots,l$, $f\in \bigl(L_1(0,T;L_2(I))\bigr)^n$, $G_j \in \bigl(L_{2l/(2l-j)}(0,T;L_2(I))\bigr)^n$, $j=0,\dots,l$.  Then there exists a unique solution $u\in \bigl(X(Q_T)\bigr)^n$ of problem \eqref{2.3}, \eqref{1.2}, \eqref{1.3} and for any $t\in [0,T]$
\begin{equation}\label{2.6}
\|u\|_{(X(Q_{t}))^n} \leq  c(T)\Bigl[ \|f\|_{(L_1(0,t;L_2(I)))^n}+
\sum\limits_{j=0}^l \|G_j\|_{(L_{2l/(2l-j)}(0,t;L_2(I)))^n} \Bigr];
\end{equation}
moreover, for $i=1,\dots,n$ and $\rho(x) \equiv 1+x$
\begin{multline}\label{2.7}
\int_I u_i^2(t,x)\rho(x)\, dx + \iint_{Q_t} \bigl( (2l+1)a_{(2l+1)i} - 2a_{(2l)i}\rho(x)\bigr)  \bigl(\partial_x^l u_i(\tau,x)\bigr)^2 \, dxd\tau \\ \leq 
2\iint_{Q_t} f_{i} u_i \rho \, dx d\tau+ 2 \sum\limits_{j=0}^l \iint_{Q_t} G_{ji} (\partial_x^j u_i \rho + j \partial_x^{j-1} u_i)\, dx d\tau.
\end{multline}
\end{lemma}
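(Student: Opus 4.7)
The plan is to exploit the decoupling provided by the hypotheses. Since $a_j\equiv 0$ for $j\leq 2l-1$ and $a_{2l+1}$, $a_{2l}$ are constant diagonal matrices, system \eqref{2.3} splits into $n$ independent scalar $(2l+1)$-order equations
$$
\partial_t u_i - (-1)^l\bigl(a_{(2l+1)i}\partial_x^{2l+1}u_i + a_{(2l)i}\partial_x^{2l}u_i\bigr) = f_i + \sum_{j=0}^{l}(-1)^j \partial_x^j G_{ji},
$$
each with zero initial and boundary data. Existence and uniqueness in $X(Q_T)$ together with the bound \eqref{2.6} then follow at once by applying the scalar analogue (\cite[Lemma~4.3]{FL} and the parallel statement in \cite{F21}) componentwise and summing over $i=1,\dots,n$.

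To prove \eqref{2.7} I would first regularize the data so that the $u_i$ are classical, derive the identity at the smooth level, and pass to the limit using \eqref{2.6}. At the smooth level, take the scalar product of the $i$-th equation with $2u_i\rho$, $\rho(x)=1+x$, and integrate over $Q_t$. The time-derivative yields $\int_I u_i^2(t,\cdot)\rho\,dx$ (since $u_i|_{t=0}\equiv 0$). For the $a_{(2l+1)i}$-term, the standard computation for odd-order KdV-type operators with linear weight gives
$$
-2(-1)^l a_{(2l+1)i}\int_I u_i\partial_x^{2l+1}u_i\,\rho\,dx = (2l+1)a_{(2l+1)i}\int_I(\partial_x^l u_i)^2\,\rho'\,dx + \mathrm{bdry},
$$
the factor $2l+1$ arising from $\rho'\equiv 1$ and $\rho''\equiv 0$, the boundary contributions at $x=R$ vanishing by $\partial_x^j u_i(\cdot,R)=0$ for $j\leq l$ (in particular $\nu_l\equiv 0$), while those at $x=0$ have a definite sign and can be discarded to produce an inequality. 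For the $a_{(2l)i}$-term, $l$ integrations by parts together with the cancellation $\int\partial_x^l u_i\cdot\partial_x^{l-1}u_i\,dx=\tfrac12[(\partial_x^{l-1}u_i)^2]_0^R=0$ give $-2a_{(2l)i}\int(\partial_x^l u_i)^2\rho\,dx$, non-negative by \eqref{1.8}. Finally, each source $(-1)^j\partial_x^j G_{ji}$, after $j$ integrations by parts, is paired with $\partial_x^j(u_i\rho)=\partial_x^j u_i\cdot\rho+j\partial_x^{j-1}u_i$ (Leibniz plus $\rho'\equiv 1$, $\rho''\equiv 0$), reproducing the right-hand side of \eqref{2.7}.

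To extract \eqref{2.6} from \eqref{2.7} I would bound each right-hand term by the data plus a small multiple of $\|u_i\|_{X(Q_t)}^2$. The critical observation is that $2l/j$ is Hölder-conjugate to $2l/(2l-j)$; combining the interpolation inequality \eqref{2.1} with Hölder in time yields
$$
\|\partial_x^j u_i\|_{L_{2l/j}(0,t;L_2(I))}\leq c(T)\|u_i\|_{X(Q_t)},\qquad j=0,\dots,l,
$$
so that $\bigl|\iint G_{ji}\partial_x^j u_i\,\rho\bigr|\leq c\|G_{ji}\|_{L_{2l/(2l-j)}(0,t;L_2(I))}\|u_i\|_{X(Q_t)}$, with the endpoint $j=l$ (where $2l/j=2$) absorbed into the positive left-hand side via Young's inequality; the lower-order $j\partial_x^{j-1}u_i$ terms are handled analogously, and $\iint f_iu_i\rho$ is bounded by $\|f_i\|_{L_1(0,t;L_2(I))}\|u_i\|_{C([0,t];L_2(I))}$. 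Summing over $i$ yields \eqref{2.6}.

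The main obstacle is cosmetic: carefully tracking the $2l+1$ boundary contributions from the top-order term and verifying that those at $x=R$ vanish while those at $x=0$ have the correct sign. Because $\rho$ is affine, only $\rho'\equiv 1$ survives in the bulk, so the calculation collapses to the single positive coefficient $(2l+1)a_{(2l+1)i}$ and the argument is transparent.
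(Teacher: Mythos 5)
Your proposal is correct and follows essentially the same route as the paper: after observing that the system decouples into $n$ independent scalar equations (since $a_j\equiv 0$ for $j\leq 2l-1$ and $a_{2l+1},a_{2l}$ are diagonal), the paper simply invokes the scalar result \cite[Lemma~4]{F21} componentwise, which already contains both \eqref{2.6} and the weighted identity \eqref{2.7}. Your additional sketch of the weighted energy argument with $\rho(x)=1+x$ (the factor $2l+1$ from $\rho'\equiv1$, the vanishing of the $x=R$ boundary terms, the sign of the $x=0$ term, and the interpolation/H\"older step recovering \eqref{2.6}) correctly reconstructs the content of that cited scalar lemma.
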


\begin{proof}
This assertion succeeds from \cite[Lemma~4]{F21}.
\end{proof}

\begin{theorem}\label{T2.1}
Let the coefficients $a_j$ satisfy conditions \eqref{1.8}, \eqref{1.9}, $u_0\in \bigl(L_2(I)\bigr)^n$, $(\mu_0,\dots,\mu_{l-1}) \in \bigl(\mathcal B^{l-1}(0,T)\bigr)^n$, 
$(\nu_0,\dots,\nu_l) \in \bigl(\mathcal B^l(0,T)\bigr)^n$, $f\in \bigl(L_1(0,T;L_2(I))\bigr)^n$, $G_j \in \bigl(L_{2l/(2l-j)}(0,T;L_2(I))\bigr)^n$, $j=0,\dots,l$. Then there exists a unique solution $u\in \bigl(X(Q_T)\bigr)^n$ of problem \eqref{2.3}, \eqref{1.2}, \eqref{1.3} and for any $t\in (0,T]$
\begin{multline}\label{2.8}
\|u\|_{(X(Q_{t}))^n} \leq c(T)\Bigl[ \|u_0\|_{(L_2(I))^n} + \|(\mu_0,\dots,\mu_{l-1})\|_{(\mathcal B^{l-1}(0,t))^n} \\+ \|(\nu_0,\dots,\nu_l)\|_{(\mathcal B^l(0,t))^n}  +
\|f\|_{(L_1(0,t;L_2(I)))^n}+
\sum\limits_{j=0}^l \|G_j\|_{(L_{2l/(2l-j)}(0,t;L_2(I)))^n} \Bigr].
\end{multline}
\end{theorem}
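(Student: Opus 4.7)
The strategy is to absorb the lower-order variable-coefficient terms of \eqref{2.3} into the forcing $\sum_j(-1)^j\partial_x^j G_j$ and then invoke Lemmas~\ref{L2.1} and~\ref{L2.2}. For $v\in(X(Q_T))^n$ define $\widetilde G_j(v):=G_j+a_{2j+1}\partial_x^{j+1}v+a_{2j}\partial_x^j v$ for $j=0,\dots,l-1$ and $\widetilde G_l(v):=G_l$. Using the continuity of the $a_j$ on $\overline Q_T$ given by \eqref{1.9}, together with the interpolation embedding $\partial_x^{j+1}v\in L_{2l/(j+1)}(0,T;L_2(I))$ with norm bounded by $C\|v\|_{(X(Q_T))^n}$ (a consequence of \eqref{2.1} applied pointwise in $t$ with $2s=(j+1)/l$ and subsequently integrated), H\"older's inequality in time yields, for $j=0,\dots,l-1$,
$$
\|\widetilde G_j(v)-\widetilde G_j(w)\|_{(L_{2l/(2l-j)}(0,T_*;L_2(I)))^n}\le C\,T_*^{\alpha_j}\,\|v-w\|_{(X(Q_{T_*}))^n},\qquad \alpha_j=\tfrac{2l-2j-1}{2l}>0.
$$

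For local existence, define $\Phi\colon(X(Q_{T_*}))^n\to(X(Q_{T_*}))^n$ by $\Phi(v)=u$, the unique weak solution (furnished by Lemmas~\ref{L2.1}--\ref{L2.2}) of the reduced equation
$$
u_t-(-1)^l(a_{2l+1}\partial_x^{2l+1}u+a_{2l}\partial_x^{2l}u)=f+\sum_{j=0}^l(-1)^j\partial_x^j\widetilde G_j(v)
$$
with data $u_0,\{\mu_j\},\{\nu_j\}$. Adding the estimates \eqref{2.5} and \eqref{2.6} applied to this reduced problem, together with the bound above, yields
$$
\|\Phi(v)-\Phi(w)\|_{(X(Q_{T_*}))^n}\le C\Big(\sum_{j=0}^{l-1}T_*^{\alpha_j}\Big)\|v-w\|_{(X(Q_{T_*}))^n},
$$
so $\Phi$ is a contraction on $(X(Q_{T_*}))^n$ once $T_*>0$ is chosen small enough, independently of the data. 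Its unique fixed point is the unique weak solution of \eqref{2.3}, \eqref{1.2}, \eqref{1.3} on $(0,T_*)$. Since $X(Q_{T_*})\subset C([0,T_*];L_2(I))$, iterating the same argument with new initial data $u(kT_*,\cdot)\in(L_2(I))^n$ on successive intervals $[kT_*,(k+1)T_*]$ extends the solution to any prescribed $(0,T)$.

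For the stated bound \eqref{2.8}, I apply the energy identity \eqref{2.7} componentwise to $u_p:=u-u_h$, where $u_h$ is the solution (from Lemma~\ref{L2.1}) of the pure boundary-value problem with data $u_0,\{\mu_j\},\{\nu_j\}$ and vanishing right-hand side. Then $u_p$ has zero data and solves an equation of the form treated by Lemma~\ref{L2.2} whose right-hand side contains $f$, the original $G_j$, and the variable-coefficient contributions $a_{2j+1}\partial_x^{j+1}(u_p+u_h)+a_{2j}\partial_x^j(u_p+u_h)$. The ensuing cross-terms in \eqref{2.7} are estimated by Cauchy--Schwarz combined with \eqref{2.1} for the lower-order derivatives; Young's inequality with a small parameter absorbs a fraction of $\|\partial_x^l u_{p,i}\|_{L_2(Q_t)}^2$ into the strictly positive left-hand side of \eqref{2.7} (coercive thanks to \eqref{1.8}), while the residual is controlled by $\int_0^t\|u_p(\tau,\cdot)\|_{(L_2(I))^n}^2\,d\tau$ plus the $u_h$-contributions bounded via \eqref{2.5} and the $f$, $G_j$-contributions bounded via \eqref{2.6}. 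Gronwall's inequality then closes the estimate and produces \eqref{2.8}; uniqueness is immediate from \eqref{2.8} applied to the difference of two solutions with zero data. The principal technical obstacle is the absorption of the top-order cross-term coming from $j=l-1$, for which the sharpness of interpolation \eqref{2.1} must combine with the strict positivity of $a_{(2l+1)i}$ in \eqref{1.8} to keep the left-hand side of \eqref{2.7} coercive after absorption.
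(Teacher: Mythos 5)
Your proposal is correct and, for the existence part, follows essentially the same route as the paper: freeze the lower-order variable-coefficient terms, solve the reduced constant-coefficient problem via Lemmas~\ref{L2.1}--\ref{L2.2}, and run a contraction whose Lipschitz constant carries a positive power of the time length (your exponent $(2l-2j-1)/(2l)$ is exactly the worst case $t_0^{1/(2l)}$ in \eqref{2.13}--\eqref{2.15}), so that the small step is data-independent and the solution is extended step by step. The only structural difference there is cosmetic: the paper first subtracts the Lemma~\ref{L2.1} solution $w$ carrying the inhomogeneous initial/boundary data, so the iteration is performed on a zero-data problem with modified forcings $\widetilde G_j$ as in \eqref{2.9}--\eqref{2.10}, whereas you keep the data inside the map $\Phi$; since the data cancel in $\Phi(v)-\Phi(w)$, both work. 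Where you genuinely diverge is the derivation of \eqref{2.8} and uniqueness: the paper simply reads the bound off the contraction and chaining estimates \eqref{2.14}--\eqref{2.16} combined with \eqref{2.5} and \eqref{2.10}, while you re-prove it a posteriori through the weighted energy identity \eqref{2.7}, interpolation \eqref{2.1}, absorption of $\|\partial_x^l u_{p,i}\|_{L_2(Q_t)}^2$ using the coercivity \eqref{1.8}, \eqref{3.20}, and Gronwall --- which is precisely the machinery the paper saves for Theorem~\ref{T3.1}; your version buys an estimate valid for every weak solution (hence uniqueness in the whole space directly), at the cost of a longer argument, whereas the paper's version is shorter because the contraction already acts on all of $\bigl(X(Q_{t_0})\bigr)^n$. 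One imprecision to fix in your write-up: the $f$- and $G_j$-terms arising on the right of \eqref{2.7} are not ``bounded via \eqref{2.6}'' (that is an estimate for a solution, not for these bilinear terms); they require the direct H\"older-in-time and interpolation estimates (as in \eqref{3.24}--\eqref{3.25}, or as in the proof of \eqref{2.6} itself), after which Young's inequality and Gronwall close the bound exactly as you describe. This is a presentational slip, not a substantive gap.
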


\begin{proof}
Denote by $w= (w_1,\dots,w_n)^T$ the solution of problem \eqref{2.3}, \eqref{1.2}, \eqref{1.3} constructed in Lemma~\ref{L2.1}
Let $U(t,x) \equiv u(t,x) - w(t,x)$. Consider an initial-boundary value problem for the function $U$:
\begin{multline}\label{2.9}
U_t - (-1)^l(a_{2l+1}\partial_x^{2l+1} U + a_{2l} \partial_x^{2l}U) -
\sum\limits_{j=0}^{l-1} (-1)^j \partial_x^j\bigl[a_{2j+1}(t,x) \partial_x^{j+1} U + 
a_{2j}(t,x) \partial_x^{j} U\bigr] \\ = f(t,x) +
 \sum\limits_{j=0}^l  (-1)^j \partial_x^j \widetilde G_j(t,x),
\end{multline}
where $\widetilde G_l \equiv G_l$, while 
$\widetilde G_j \equiv G_j + a_{2j+1} \partial_x^{j+1} w + a_{2j} \partial_x^j w$
for $j<l$, 
and zero initial and boundary conditions \eqref{1.2}, \eqref{1.3}.
Note that by virtue of \eqref{2.1} for $m=0$ or $m=1$, $j<l$ and $i=1,\dots,n$
$$
\|\partial_x^{j+m} w_i \|_{L_2(I)} \leq c\|\partial_x^l w_i\|_{L_2(I)}^{(j+m)/l}\|w_i\|_{L_2(I)}^{(l-j-m)/l} +c \|w_i\|_{L_2(I)}.
$$
Therefore, $\widetilde G_j \in \bigl(L_{2l/(2l-j)}(0,T;L_2(I))\bigr)^n$ with
\begin{equation}\label{2.10}
\|\widetilde G_j\|_{(L_{2l/(2l-j)}(0,t;L_2(I)))^n} \leq \|G_j\|_{(L_{2l/(2l-j)}(0,t;L_2(I)))^n} + c(T)\|w\|_{(X(Q_{t}))^n}.
\end{equation}

In order to obtain the solution to the initial-value problem for system \eqref{2.9} we apply the contraction principle and first construct it on a small time interval $[0,t_0]$ as the fixed point  of a map $U = \Lambda V$, where for $V\in \bigl(X(Q_{t_0})\bigr)^n$ the function $U\in \bigl(X(Q_{t_0})\bigr)^n$ is the solution to an initial-boundary value problem for a system
\begin{multline}\label{2.11}
U_t - (-1)^l(a_{2l+1}\partial_x^{2l+1} U + a_{2l} \partial_x^{2l}U) =
\sum\limits_{j=0}^{l-1} (-1)^j \partial_x^j\bigl[a_{2j+1}(t,x) \partial_x^{j+1} V + 
a_{2j}(t,x) \partial_x^{j} V\bigr] \\ +f(t,x) +
 \sum\limits_{j=0}^l  (-1)^j \partial_x^j \widetilde G_j(t,x),
\end{multline}
with zero initial and boundary conditions \eqref{1.2}, \eqref{1.3}. Note that similarly to \eqref{2.10}  the hypothesis of Lemma~\ref{L2.2} is verified and such a map is defined for any $t_0 \in (0,T]$. Moreover, according to \eqref{2.6}
\begin{multline}\label{2.12}
\|U\|_{(X(Q_{t_0}))^n} \leq c(T) \Bigl[ \|f\|_{(L_1(0,t_0;L_2(I)))^n} +
\sum\limits_{j=0}^l \|\widetilde G_j\|_{(L_{2l/(2l-j)}(0,t_0;L_2(I)))^n}  \\+
\sum\limits_{j=0}^{l-1} \bigl(\|\partial_x^{j+1} V\|_{(L_{2l/(2l-j)}(0,t_0;L_2(I)))^n} + (\|\partial_x^{j} V\|_{(L_{2l/(2l-j)}(0,t_0;L_2(I)))^n}\bigr) \Bigr].
\end{multline}
By virtue of \eqref{2.1} if $j+m \leq 2l-1$ for $i=1,\dots,n$
\begin{multline}\label{2.13}
\|\partial_x^m V_i\|_{L_{2l/(2l-j)}(0,t_0;L_2(I))} \\\leq c\Bigl(\int_0^{t_0} \bigl( \|\partial_x^l V_i\|_{L_2(I)}^{2m/(2l-j)} \|V_i\|_{L_2(I)}^{2(l-m)/(2l-j)} + 
\|V_i\|_{L_2(I)}^{2l/(2l-j)} \bigr)\,dt \Bigr)^{(2l-j)/(2l)} \\ \leq
c t_0^{(2l-j-m)/(2l)} \|V_i\|_{C([0,t_0];L_2(I))}^{(l-m)/l} 
\|\partial_x^l V_i\|_{L_2(Q_{t_0})}^{m/l} + c t_0^{(2l-j)/(2l)}  \|V_i\|_{C([0,t_0];L_2(I))} \\ 
\leq c(T) t_0^{1/(2l)} \|V_i\|_{X(Q_{t_0})}.
\end{multline}
Therefore, it follows from \eqref{2.12} that
\begin{multline}\label{2.14}
\|U\|_{(X(Q_{t_0}))^n} \leq c(T) \Bigl[\|f\|_{(L_1(0,t_0;L_2(I)))^n}+ \sum\limits_{j=0}^l \|\widetilde G_j\|_{(L_{2l/(2l-j)}(0,t_0;L_2(I)))^n} \\ + t_0^{1/(2l)} \|V\|_{(X(Q_{t_0}))^n} \Bigr].
\end{multline}
Similarly to \eqref{2.14} for $\widetilde V \in \bigl(X(Q_{t_0})\bigr)^n$, $\widetilde U = \Lambda \widetilde V$
\begin{equation}\label{2.15}
\|U - \widetilde U\|_{(X(Q_{t_0}))^n} \leq c(T) t_0^{1/(2l)} \|V - \widetilde V\|_{(X(Q_{t_0}))^n}.
\end{equation}
Inequalities \eqref{2.14}, \eqref{2.15} provide existence of the unique solution $U\in \bigl(X(Q_{t_0})\bigr)^n$ to the considered problem if, for example, $c(T) t_0^{1/(2l)} \leq 1/2$. Then since the value of $t_0$ depends only on $T$ step by step this solution can be extended to the whole time segment $[0,T]$, moreover,
\begin{equation}\label{2.16}
\|U\|_{(X(Q_{t}))^n} \leq c(T) \Bigl[\|f\|_{(L_1(0,t;L_2(I)))^n}\sum\limits_{j=0}^l \|\widetilde G_j\|_{(L_{2l/(2l-j)}(0,t;L_2(I)))^n}\Bigr].
\end{equation}
Combining \eqref{2.5} (applied to the function $w$), \eqref{2.10} and \eqref{2.16}, for $u \equiv U + w$ we complete the proof.
\end{proof}

Introduce certain additional notation. Let 
$$
u = S(u_0, (\mu_0,\dots,\mu_{l-1}), (\nu_0,\dots,\nu_l), f, (G_0,\dots,G_l))
$$
be the solution of problem \eqref{2.3}, \eqref{1.2}, \eqref{1.3} from the space $\bigl(X(Q_T)\bigr)^n$ under the hypothesis of Theorem~\ref{T2.1}. Define also 
$$
W = (u_0, (\mu_0,\dots,\mu_{l-1}),(\nu_0,\dots,\nu_l)), 
$$
$$
\widetilde S W = S(W, 0,(0,\dots,0)),\quad \widetilde S: \bigl(L_2(I)\times \mathcal B^{l-1}(0,T) \times \mathcal B^{l}(0,T)\bigr)^n \to \bigl(X(Q_T)\bigr)^n,
$$
\begin{multline*}
S_0 f = S(0,( 0,\dots,0), (0,\dots,0), f, (0,\dots,0)), \\S_0: \bigl(L_{1}(0,T;L_2(I))\bigr)^n \to \bigl(X(Q_T)\bigr)^n,
\end{multline*}
\begin{multline*}
\widetilde S_j G_j = S(0,( 0,\dots,0), (0,\dots,0), 0, (0,\dots,G_j,\dots,0)), \\S_j: \bigl(L_{2l/(2l-j)}(0,T;L_2(I))\bigr)^n \to \bigl(X(Q_T)\bigr)^n,\quad j=0,\dots,l.
\end{multline*}

Let $\widetilde W_1^1 (0,T) = \{\varphi \in W_1^1(0,T): \varphi(0)=0\}$. Obviously, the equivalent norm in this space is $\|\varphi'\|_{L_1(0,T)}$.

Let a function $\omega \in C(\overline{I})$. On the space of functions $u(t,x)$, lying in $L_1(I)$ for all $t\in [0,T]$, define a linear operator $Q(\omega)$ by a formula $(Q(\omega)u)(t) = q(t;u,\omega)$, where
$$
q(t;u,\omega)\equiv \int_I u(t,x) \omega(x) \,dx,\quad t\in [0,T].
$$

\begin{lemma}\label{L2.3}
Let the hypothesis of Theorem~\ref{T2.1} be verified. Let the function $\omega$ satisfy condition \eqref{1.12}. Then for the function $u = (u_1\dots,u_n)^T =  S(u_0, (\mu_0,\dots,\mu_{l-1}), (\nu_0,\dots,\nu_l), f, (G_0,\dots,G_l))$ the corresponding function $q(\cdot;u_i,\omega)= Q(\omega)u_i \in W_1^1(0,T)$, i=1,\dots,n, and for a.e. $t\in (0,T)$
\begin{multline}\label{2.17}
q'(t;u_i,\omega) = r(t;u_i,\omega) \equiv \nu_{li}(t) a_{(2l+1)i} \omega^{(l)}(R) \\+
\sum\limits_{k=0}^{l-1} (-1)^{l+k}\left[ \nu_{ki}(t) \left( a_{(2l+1)i}\omega^{(2l-k)}(R)  -a_{(2l)i} \omega^{(2l-k-1)}(R) \right)\right. \\-
\left.\mu_{ki}(t) \left( a_{(2l+1)i}\omega^{(2l-k)}(0) - a_{(2l)i} \omega^{(2l-k-1)}(0)\right) \right]  \\ +
\sum\limits_{m=1}^n \sum\limits_{j=0}^{l-1} \sum\limits_{k=0}^{j-1} (-1)^{j+k}\left[ \nu_{km}(t) \left( (a_{(2j+1)im} \omega^{(j)})^{(j-k)}(R) - 
(a_{(2j)im} \omega^{(j)})^{(j-k-1)}(R) \right)\right. \\  -
\left. \mu_{km}(t) \left( (a_{(2j+1)im} \omega^{(j)})^{(j-k)}(0) - (a_{(2j)im} \omega^{(j)})^{(j-k-1)}(0) \right) \right] \\+
(-1)^{l+1} \int_I u_i(t,x) \left(a_{(2l+1)i}\omega^{(2l+1)} - 
a_{(2l)i} \omega^{(2l)} \right) \,dx \\+
\sum\limits_{m=1}^n \sum\limits_{j=0}^{l-1} (-1)^{j+1} \int_I u_m(t,x) \left[ (a_{(2j+1)im} \omega^{(j)})^{(j+1)} - (a_{(2j)im} \omega^{(j)})^{(j)} \right] \,dx \\+
\int_I f_i(t,x) \omega \,dx +
\sum\limits_{j=0}^l \int_I G_{ji}(t,x) \omega^{(j)} \,dx,
\end{multline}
\begin{multline}\label{2.18}
\|q'(\cdot;u_i,\omega)\|_{L_1(0,T)} \leq c(T) \Bigl[ \|u_0\|_{(L_2(I))^n} + \|(\mu_0,\dots,\mu_{l-1})\|_{(\mathcal B^{l-1}(0,T))^n} \\ + \|(\nu_0,\dots,\nu_l)\|_{(\mathcal B^l(0,T))^n}  + \|f\|_{(L_1(0,T;L_2(I)))^n} +
\sum\limits_{j=0}^l \bigl(\|G_j\|_{(L_{2l/(2l-j)}(0,T;L_2(I)))^n} \bigr)\Bigr],
\end{multline}
where the constant $c$ does not decrease in $T$.
\end{lemma}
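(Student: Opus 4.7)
The plan is to derive \eqref{2.17} by inserting a specially chosen test function into identity \eqref{2.4} and integrating by parts in $x$ so as to move every spatial derivative of $u$ onto $\omega$ and its derivatives. The endpoint conditions on $\omega$ in \eqref{1.12} are precisely those required of test functions in Definition~\ref{D1.1}, and the higher-order endpoint values of $\omega$ that \eqref{1.12} does \emph{not} force to zero are precisely the ones that survive in the boundary terms of \eqref{2.17}.

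Fix $i\in\{1,\dots,n\}$ and $\chi\in C^1([0,T])$ with $\chi(T)=0$. I would use in \eqref{2.4} the test function $\phi$ whose $i$-th component is $\chi(t)\omega(x)$ and whose remaining components vanish, and then integrate by parts $l$ times in each of the top-order spatial integrals and $j$ or $j+1$ times in each lower-order summand indexed by $j\leq l-1$. At every step the boundary traces $\partial_x^k u_m(t,\cdot)|_{x=0,R}$ with $k\leq l-1$ appearing are replaced by $\mu_{km}$ or $\nu_{km}$ via \eqref{1.3}, while \eqref{1.12} kills any boundary contribution that carries a factor $\omega^{(s)}(0)$ with $s\leq l$ or $\omega^{(s)}(R)$ with $s\leq l-1$. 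Collecting the surviving boundary terms, the resulting volume integrals against $\omega^{(2l+1)}$, $\omega^{(2l)}$, $(a_{(2j+1)im}\omega^{(j)})^{(j+1)}$ and $(a_{(2j)im}\omega^{(j)})^{(j)}$, the source integrals $\int_I f_i\omega\,dx + \sum_j\int_I G_{ji}\omega^{(j)}\,dx$, and the term $\int_0^T a_{(2l+1)i}\nu_{li}(t)\chi(t)\omega^{(l)}(R)\,dt$ already present in \eqref{2.4}, one obtains
\begin{equation*}
\int_0^T q(t;u_i,\omega)\chi'(t)\,dt + q(0;u_{0i},\omega)\chi(0) + \int_0^T r(t;u_i,\omega)\chi(t)\,dt = 0
\end{equation*}
with $r$ equal to the right-hand side of \eqref{2.17}. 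The arbitrariness of $\chi$ together with the standard characterization of $W_1^1(0,T)$ then yields $q(\cdot;u_i,\omega)\in W_1^1(0,T)$ with weak derivative $r$.

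For the bound \eqref{2.18}, I would estimate each summand of $r$ in $L_1(0,T)$ separately: boundary-data terms are controlled via the continuous embeddings $H^{(l-k)/(2l+1)}(0,T)\hookrightarrow L_1(0,T)$ already built into the norms on $\mathcal B^{l-1}(0,T)$ and $\mathcal B^l(0,T)$; volume integrals against $u$ by Cauchy--Schwarz using $\omega\in H^{2l+1}(I)$ and the continuity of the $a_j$'s on $\overline Q_T$, followed by $\|u\|_{L_1(0,T;L_2(I))}\leq T^{1/2}\|u\|_{X(Q_T)}$; the $f_i$- and $G_{ji}$-terms by H\"older in $t$ (using $2l/(2l-j)\geq 1$). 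Applying Theorem~\ref{T2.1} to bound $\|u\|_{(X(Q_T))^n}$ in terms of the data then yields \eqref{2.18}, and the monotonicity of $c(T)$ in $T$ is inherited from Theorem~\ref{T2.1} together with the corresponding monotonicity of the embedding constants.

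The main technical obstacle will be the combinatorial bookkeeping of the boundary terms produced by iterated integration by parts in the variable-coefficient lower-order part: the Leibniz rule applied to $(a_{(2j+1)im}\omega^{(j)})^{(j-k)}$ distributes derivatives between $a$ and $\omega$, and careful sign-tracking across the indices $(m,j,k)$ is required to see that what remains, after \eqref{1.12} removes every endpoint value of $\omega$ of the disallowed orders, is exactly the double sum appearing in the middle block of \eqref{2.17}. Beyond this bookkeeping the analysis reduces to routine Sobolev-trace and H\"older estimates.
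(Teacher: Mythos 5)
Your proposal is correct and follows essentially the same route as the paper: one inserts the separated test function $\phi_i(t,x)=\psi(t)\omega(x)$ (the paper takes $\psi\in C_0^\infty(0,T)$, so the initial term you keep simply drops out) into identity \eqref{2.4}, integrates by parts in $x$ using \eqref{1.12} and the trace conditions built into Definition~\ref{D1.1} to identify the distributional derivative $q'=r$, and then bounds $r$ in $L_1(0,T)$ term by term via estimate \eqref{2.8}. No gaps.
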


\begin{proof}
For an arbitrary function $\psi \in C_0^\infty(0,T)$ let $\phi_i(t,x) \equiv \psi(t)\omega(x)$ for certain $i$,  $\phi_m(x) \equiv 0$ when $m\ne i$. This function $\phi$ satisfies the assumption on a test function from Definition~\ref{D1.1} an then equality \eqref{2.4} after integration by parts yields that
\begin{equation}\label{2.19}
\int_0^T \psi'(t) q(t;u_i,\omega) \, dt = -\int_0^T \psi(t) r(t;u_i,\omega) \,dt.
\end{equation}
Since $r\in L_1(0,T)$ it follows from \eqref{2.19} that there exists the Sobolev derivative $q'(t;u_i,\omega) = r(t;u_i,\omega) \in L_1(0,T)$ and
\begin{multline*}
\|q'\|_{L_1(0,T)} \leq c\Bigl[ \sum\limits_{j=0}^{l-1} \|\mu_j\|_{(L_1(0,T))^n} + \sum\limits_{j=0}^l \|\nu_j\|_{(L_1(0,T))^n} + \|f\|_{(L_1(0,T;L_2(I)))^n} \\+
\sum\limits_{j=0}^l \|G_j\|_{(L_1(0,T;L_1(I)))^n} + \|u\|_{(L_1(0,T;L_2(I)))^n} \Bigr].
\end{multline*}
Since $\|u\|_{(L_1(0,T;L_2(I))^n} \leq T \|u\|_{(C([0,T];L_2(I)))^n} \leq T \|u\|_{(X(Q_T))^n}$, application of inequality \eqref{2.8} finishes the proof.
\end{proof}

\section{The direct problem}\label{S3}

\begin{proof}[Proof of the existence part of Theorem~\ref{T1.1}] 
On the space $\bigl(X(Q_T)\bigr)^n$ consider a map $\Theta$
\begin{equation}\label{3.1}
u = \Theta v \equiv \widetilde S W + S_0 f - \sum\limits_{j=0}^l \widetilde S_j g_j(t,x,v,\dots,\partial_x^{l-1} v).
\end{equation}
Note that according to conditions \eqref{1.10}, \eqref{1.11} for $i=1,\dots,n$
\begin{equation}\label{3.2}
|g_{ji}(t,x,v,\dots,\partial_x^{l-1} v)| \leq c \sum\limits_{k=0}^{l-1} \sum\limits_{m=0}^{l-1} \bigl(|\partial_x^m v |^{b_1(j,k,m)} + |\partial_x^m v |^{b_2(j,k,m)} \bigr)  |\partial_x^k v|
\end{equation}
In particular, conditions \eqref{1.13} and inequality \eqref{2.2} yield that $g_{ji}(t,x,v,\dots,\partial_x^{l-1} v) \in L_{2l/(2l-j)}(0,T;L_2(I))$, moreover,
\begin{multline}\label{3.3}
\|g_{j}(t,x,v,\dots,\partial_x^{l-1} v)\|_{(L_{2l/(2l-j)}(0,T;L_2(I)))^n}  \\ \leq 
c \sum\limits_{k=0}^{l-1} \sum\limits_{m=0}^{l-1} \sum\limits_{i=1}^2 (T^{((4l-2j-2k)-(2m+1)b_i(j,k,m))/(4l)} +T^{(2l-j)/(2l)}) \|v\|_{(X(Q_T))^n}^{b_i(j,k,m)+1}.
\end{multline}
In particular, Theorem \ref{2.1} ensures that the map $\Theta$ exists. Let 
\begin{equation}\label{3.4}
b_1 = \min\limits_{j,k,m}(b_1(j,k,m)), \quad b_2 = \max\limits_{j,k,m}(b_2(j,k,m)), \quad 0< b_1 \leq b_2,
\end{equation}
then it follows from \eqref{3.3} that
\begin{equation}\label{3.5}
\|g_j(t,x,v,\dots,\partial_x^{l-1} v)\|_{(L_{2l/(2l-j)}(0,T;L_2(I)))^n} \leq c(T)\left( \|v\|_{(X(Q_T))^n}^{b_1+1} + \|v\|_{(X(Q_T))^n}^{b_2+1}\right).
\end{equation}
therefore, inequality \eqref{2.8} implies that
\begin{equation}\label{3.6}
\|\Theta v\|_{(X(Q_T))^n} \leq c(T)c_0 + c(T)\left( \|v\|_{(X(Q_T))^n}^{b_1+1} + \|v\|_{(X(Q_T))^n}^{b_2+1}\right).
\end{equation}

Next, for any functions $v_1, v_2 \in \bigl(X(Q_T)\bigr)^n$
\begin{multline}\label{3.7}
|g_{ji}(t,x,v_1,\dots,\partial_x^{l-1} v_1) - g_{ji}(t,x,v_2,\dots,\partial_x^{l-1} v_2)|  
\\ \leq c\sum\limits_{k=0}^{l-1} \sum\limits_{m=0}^{l-1} \left(|\partial_x^m v_1 |^{b_1(j,k,m)} + |\partial_x^m v_2 |^{b_1(j,k,m)} + |\partial_x^m v_1 |^{b_2(j,k,m)}+
|\partial_x^m v_2 |^{b_2(j,k,m)} \right) \\ \times |\partial_x^k (v_1-v_2)|,
\end{multline}
therefore, similarly to \eqref{3.5}
\begin{multline}\label{3.8}
\|g_j(t,x,v_1,\dots,\partial_x^{l-1} v_1) - g_j(t,x,v_2,\dots,\partial_x^{l-1} v_2)\|_{(L_{2l/(2l-j)}(0,T;L_2(I)))^n} \\ \leq 
c(T)\left( \|v_1\|_{(X(Q_T))^n}^{b_1} + \|v_2\|_{(X(Q_T))^n}^{b_1} + \|v_1\|_{(X(Q_T))^n}^{b_2}  + \|v_2\|_{(X(Q_T))^n}^{b_2}\right) \\ \times \|v_1-v_2\|_{(X(Q_T))^n}.
\end{multline}

and similarly to \eqref{3.6}
\begin{multline}\label{3.9}
\|\Theta v_1 - \Theta v_2\|_{(X(Q_T))^n}  \\ \leq 
c(T)\left( \|v_1\|_{(X(Q_T))^n}^{b_1} + \|v_2\|_{(X(Q_T))^n}^{b_1} + \|v_1\|_{(X(Q_T))^n}^{b_2} + \|v_2\|_{(X(Q_T))^n}^{b_2}\right) \\ \times \|v_1-v_2\|_{(X(Q_T))^n}.
\end{multline}

Now choose $r>0$ such that
\begin{equation}\label{3.10}
r^{b_1} + r^{b_2} \leq \frac1{4c(T)}
\end{equation}
and then $\delta>0$ such that
\begin{equation}\label{3.11}
\delta \leq \frac{r}{2c(T)}.
\end{equation}
Then it follows from \eqref{3.6} and \eqref{3.9} that on the ball $\overline X_{rn}(Q_T)$ the map $\Theta$ is a contraction. Its unique fixed point $u\in \bigl(X(Q_T)\bigr)^n$ is the desired solution. Moreover,
\begin{equation}\label{3.12}
\|u\|_{(X(Q_T))^n} \leq c(c_0).
\end{equation}
\end{proof}

Note that the above arguments ensures uniqueness only in a certain ball. In order to establish uniqueness and continuous dependence in the whole space we apply another approach. Then the rest part of Theorem~\ref{T1.1} succeeds from \eqref{3.12} and the theorem below.

\begin{theorem}\label{T3.1}
Let the assumptions on the functions $a_j$ and $g_j$ from the hypothesis of Theorem~\ref{T1.1} be satisfied. Let $u_0, \widetilde u_0 \in \bigl(L_2(I)\bigr)^n$, 
$(\mu_0,\dots, \mu_{l-1}), (\widetilde\mu_0,\dots \widetilde\mu_{l-1}) \in \bigl(\mathcal B^{l-1}(0,T)\bigr)^n$, 
$(\nu_0,\dots,\nu_l), (\widetilde\nu_0,\dots,\widetilde\nu_l)  \in \bigl(\mathcal B^l(0,T)\bigr)^n$, 
$f, \widetilde f\in \bigl(L_1(0,T;L_2(I))\bigr)^n$ and let $u, \widetilde u$ be two weak solutions to corresponding problems \eqref{1.1}--\eqref{1.3} in the space $\bigl(X(Q_T)\bigr)^n$ with
$\|u\|_{(X(Q_T))^n}, \|\widetilde u\|_{(X(Q_T))^n} \leq K$ for certain positive $K$. Then there exists a positive constant $c= c(T,K)$ such that
\begin{multline}\label{3.13}
\|u - \widetilde u\|_{(X(Q_T))^n} \leq c \bigl( \|u_0 - \widetilde u_0\|_{(L_2(I))^n} +
\|(\mu_0 - \widetilde\mu_0,\dots,\mu_{l-1} - \widetilde\mu_{l-1})\|_{(\mathcal B^{l-1}(0,T))^n} \\ +
\|(\nu_0 - \widetilde\nu_0,\dots,\nu_{l} - \widetilde\nu_{l})\|_{(\mathcal B^{l}(0,T))^n} +
\|f - \widetilde f\|_{(L_1(0,T;L_2(I)))^n} \bigr).
\end{multline}
\end{theorem}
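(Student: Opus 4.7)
Set $U := u - \widetilde u$. Subtracting the two weak identities \eqref{1.7} shows that $U$ is a weak solution of the linear problem \eqref{2.3}, \eqref{1.2}, \eqref{1.3} with initial datum $u_0 - \widetilde u_0$, boundary data $\mu_j - \widetilde\mu_j$ and $\nu_j - \widetilde\nu_j$, source $f - \widetilde f$, and an additional divergence-form source $\sum_{j=0}^l (-1)^j \partial_x^j G_j$, where
\[G_j := -\bigl[g_j(t,x,u,\ldots,\partial_x^{l-1}u) - g_j(t,x,\widetilde u,\ldots,\partial_x^{l-1}\widetilde u)\bigr].\]
To bring in zero boundary/initial data (required by the energy identity of Lemma~\ref{L2.2}), split $U = W + V$ with $W := \widetilde S(u_0 - \widetilde u_0, \{\mu_j - \widetilde\mu_j\}, \{\nu_j - \widetilde\nu_j\})$. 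Lemma~\ref{L2.1} gives $\|W\|_{(X(Q_T))^n} \le c(T)D$, with $D$ denoting the right-hand side of \eqref{3.13}; then $V := U - W$ solves the linear problem with zero initial/boundary data and forcing $f - \widetilde f + \sum_j (-1)^j \partial_x^j G_j$.

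For $V$ I would use the energy identity of \eqref{2.7}, adapted to the full linear operator: the additional contributions $\iint_{Q_t} a_j\,\partial_x^{j+1} V \cdot \partial_x^j(V\rho)\,dxd\tau$, $j \le 2l-1$, are lower order and, via the interpolation inequality \eqref{2.1}, absorbed as $\varepsilon\|\partial_x^l V\|^2_{L_2(Q_t)} + C_\varepsilon \int_0^t \|V(\tau)\|^2_{(L_2(I))^n}\,d\tau$. The nonlinear terms $\iint_{Q_t} G_{ji}(\partial_x^j V_i\,\rho + j\partial_x^{j-1} V_i)\,dxd\tau$ are treated by distributing H\"older exponents in $x$ and $t$ over the pointwise bound
\[|G_{ji}| \le c\sum_{k,m=0}^{l-1}\bigl(|\partial_x^m u|^{b_1(j,k,m)} + |\partial_x^m \widetilde u|^{b_1(j,k,m)} + |\partial_x^m u|^{b_2(j,k,m)} + |\partial_x^m \widetilde u|^{b_2(j,k,m)}\bigr)|\partial_x^k U|,\]
combined with \eqref{2.1} and the a priori bounds $\|u\|_{(X(Q_T))^n}, \|\widetilde u\|_{(X(Q_T))^n} \le K$. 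Under \eqref{1.13} --- the same condition that underlies the estimate \eqref{2.2} --- each such contribution admits a bound of the form $\varepsilon\|\partial_x^l V_i\|^2_{L_2(Q_t)} + C(T,K,\varepsilon)\int_0^t \|U(\tau)\|^2_{(L_2(I))^n}\,d\tau$.

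Choosing $\varepsilon$ small enough to absorb the $\|\partial_x^l V_i\|^2$-terms into the left-hand side, using $\|U\|_{L_2} \le \|V\|_{L_2} + \|W\|_{L_2}$ together with the Lemma~\ref{L2.1} bound on $W$, and treating the $\iint(f-\widetilde f)_i V_i \rho$ term by Cauchy--Schwarz, I arrive at
\[\|V(t)\|^2_{(L_2(I))^n} + c\sum_{i=1}^n \|\partial_x^l V_i\|^2_{L_2(Q_t)} \le C(T,K)\,D^2 + C(T,K)\int_0^t \|V(\tau)\|^2_{(L_2(I))^n}\,d\tau.\]
Gronwall's inequality yields $\sup_{t\in[0,T]}\|V(t)\|^2_{(L_2(I))^n} \le c(T,K)\,D^2$, and reinserting this on the right gives $\|V\|_{(X(Q_T))^n} \le c(T,K)\,D$; combined with the bound on $W$ this is \eqref{3.13}. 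The main obstacle is the nonlinear estimate: under the non-strict assumption \eqref{1.13} the time prefactor $T^{((4l-2j-2k)-(2m+1)b_2(j,k,m))/(4l)}$ in \eqref{2.2} may equal $T^0 = 1$, so one cannot gain smallness in $T$ (as in Theorems~\ref{T1.2} and~\ref{T1.4}); one must distribute H\"older exponents so that the top-order derivative $\partial_x^l V$ always appears quadratically and absorbable, and everything else collapses to the constant $K^{b_1} + K^{b_2}$ multiplying $\|U(\tau)\|^2_{L_2}$ under a time integral suitable for Gronwall.
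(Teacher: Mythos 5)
Your proposal follows essentially the same route as the paper's proof: split $u-\widetilde u$ into a linear part carrying the differences of the initial/boundary data (bounded via Lemma~\ref{L2.1}/Theorem~\ref{T2.1}) plus a zero-data remainder, apply the energy inequality \eqref{2.7} with the lower-order linear terms and the nonlinear differences treated as divergence-form sources, absorb the top-order derivative by interpolation \eqref{2.1}, and close with Gronwall using the a priori bound $K$. The only correction needed is that the factor multiplying $\int_0^t\|U(\tau)\|^2_{(L_2(I))^n}\,d\tau$ cannot be a constant $C(T,K,\varepsilon)$: the weights $\sup_{x\in I}|\partial_x^m u|^{2lb/(2l-j-k)}$ are only integrable in time (cf.\ \eqref{3.25}), so the bound takes the form $\int_0^t\gamma(\tau)\|U(\tau)\|^2_{(L_2(I))^n}\,d\tau$ with $\|\gamma\|_{L_1(0,T)}\le c(T,K)$, exactly as in \eqref{3.28}, and Gronwall still applies with this integrable weight.
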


\begin{proof}
Let $w \in \bigl(X(Q_T)\bigr)^n$ be a solution to a linear problem
\begin{equation}\label{3.14}
w_t - (-1)^l( a_{2l+1} \partial_x^{2l+1} w + a_{2l} \partial_x^{2l} w) =0,
\end{equation}
\begin{equation}\label{3.15}
w(0,x) = u_0(x) - \widetilde u_0(x),
\end{equation}
\begin{equation}\label{3.16}
\partial_x^j w(t,0) = \mu_j(t) - \widetilde\mu_j(t),\ j=0,\dots,l-1,\quad
\partial_x^j w(t,R) = \nu_j(t) - \widetilde\nu_j(t),\ j=0,\dots,l.
\end{equation}
Lemma~\ref{L2.1} ensures that such a function exists and according to \eqref{2.5}
\begin{multline}\label{3.17}
\|w\|_{(X(Q_T))^n} \leq c(T) \bigl( \|u_0 - \widetilde u_0\|_{(L_2(I))^n} +
\|(\mu_0 - \widetilde\mu_0,\dots,\mu_{l-1} - \widetilde\mu_{l-1})\|_{(\mathcal B^{l-1}(0,T))^n} \\ +
\|(\nu_0 - \widetilde\nu_0,\dots,\nu_{l} - \widetilde\nu_{l})\|_{(\mathcal B^{l}(0,T))^n} \bigr).
\end{multline}
Let $v(t,x) \equiv u(t,x) - \widetilde u(t,x) - w(t,x)$, Then $v \in \bigl(X(Q_T)\bigr)^n$ is a solution to an initial-boundary problem in $Q_T$ for a system
\begin{multline}\label{3.18}
v_t - (-1)^l(a_{2l+1}\partial_x^{2l+1} v + a_{2l} \partial_x^{2l} v) = (f - \widetilde f) \\+ 
\sum\limits_{j=0}^{l-1} (-1)^j \partial_x^j\bigl[a_{2j+1}(t,x) \partial_x^{j+1} 
(u - \widetilde u) + a_{2j}(t,x) \partial_x^{j} (u - \widetilde u)\bigr] \\-
\sum\limits_{j=0}^l (-1)^j \partial_x^j \bigl[g_j(t,x,u,\dots,\partial_x^{l-1} u) - 
g_j(t,x,\widetilde u,\dots,\partial_x^{l-1} \widetilde u)\bigr]
\end{multline}
with zero initial and boundary conditions of \eqref{1.2}, \eqref{1.3} type.
Similarly to \eqref{2.11}--\eqref{2.13}
$a_{2j+1}(t,x) \partial_x^{j+1} u + a_{2j}(t,x) \partial_x^{j} u \in 
\bigl(L_{2l/(2l-j)}(0,T;L_2(I))\bigr)^n$,
similarly to \eqref{3.2}, \eqref{3.3}
$g_j(t,x,u,\dots,\partial_x^{l-1} u) \in \bigl(L_{2l/(2l-j)}(0,T;L_2(I))\bigr)^n$. The same properties hold in the case of the function $\widetilde u$. Therefore, the hypothesis of Lemma~\ref{L2.2} is satisfied and for $i=1,\dots,n$ according to \eqref{2.7}
\begin{multline}\label{3.19}
\int_I v_i^2(t,x)\rho\, dx + \iint_{Q_t} \bigl( (2l+1)a_{(2l+1)i} - 2a_{(2l)i}\rho\bigr)  \bigl(\partial_x^l v_i(\tau,x)\bigr)^2 \, dxd\tau \\ \leq 
2 \iint_{Q_t} (f_i - \widetilde f_i) v_i \rho \,dx d\tau \\+
2 \sum\limits_{m=1}^n \sum\limits_{j=0}^{l-1} \iint_{Q_t} 
\Bigl( a_{(2j+1)im}(t,x) \partial_x^{j+1} (v_m + w_m) + 
a_{(2j)im}(t,x) \partial_x^{j} (v_m + w_m) \Bigr) \\ \times
(\partial_x^j v_i \rho + j \partial_x^{j-1} v_i)\, dx d\tau \\-
2 \sum\limits_{j=0}^{l} \iint_{Q_t} \Bigl(g_{ji}(t,x,u,\dots,\partial_x^{l-1} u) - 
g_{ji}(t,x,\widetilde u,\dots,\partial_x^{l-1} \widetilde u) \Bigr) \\ \times
(\partial_x^j v_i \rho + j \partial_x^{j-1} v_i)\, dx d\tau.
\end{multline}
Note that by virtue of \eqref{1.8} uniformly in $i$ and $x$
\begin{equation}\label{3.20}
(2l+1)a_{(2l+1)i} - 2a_{(2l)i}\rho(x) \geq \alpha_0>0.
\end{equation}
It follows from \eqref{2.1} for $p=2$ that if $j\leq l-1$
\begin{multline}\label{3.21}
\iint_{Q_t} |\partial_x^{j+1} v_m|\cdot |\partial_x^j v_i|\, dx d\tau 
\leq c\int_0^t \left[\|\partial_x^l v\|_{(L_2(I))^n}^{(2l-1)/l)} \|v\|_{(L_2(I))^n}^{1/l} + 
\|v\|_{(L_2(I))^n}^2 \right] \,d\tau \\
\leq \varepsilon \iint_{Q_t} |\partial_x^l v|^2 \,dx d\tau +
c(\varepsilon) \iint_{Q_t} |v|^2\rho \,dx d\tau,
\end{multline}
where $\varepsilon>0$ can be chosen arbitrarily small;
\begin{multline}\label{3.22}
\iint_{Q_t} |\partial_x^{j+1} w_m|\cdot |\partial_x^j v_i|\, dx d\tau 
\leq  \Bigl(\iint_{Q_t} (\partial_x^j v_i)^2\, dx d\tau 
\iint_{Q_t} (\partial_x^{j+1} w_m)^2 \, dx d\tau \Bigr)^{1/2} \\ \leq
\varepsilon \iint_{Q_t} |\partial_x^l v|^2 \,dx d\tau +
c(\varepsilon) \iint_{Q_t} |v|^2\rho \,dx d\tau   + c\|w\|^2_{(X(Q_T))^n}.
\end{multline}
Next, similarly to \eqref{3.7}
\begin{multline}\label{3.23}
|g_{ji}(t,x,u,\dots,\partial_x^{l-1} u) - 
g_{ji}(t,x, \widetilde u,\dots,\partial_x^{l-1} \widetilde u)|  
\\ \leq c\sum\limits_{k=0}^{l-1} \sum\limits_{m=0}^{l-1} \left(|\partial_x^m u |^{b_1(j,k,m)} + |\partial_x^m \widetilde u |^{b_1(j,k,m)} + |\partial_x^m u |^{b_2(j,k,m)}+
|\partial_x^m \widetilde u |^{b_2(j,k,m)} \right) \\ \times |\partial_x^k (v + w)|.
\end{multline}
Note that, for example, for $j\leq l$, $k,m\leq l-1$ if $0\leq b \leq (4l-2j-2k)/(2m+1)$
\begin{multline}\label{3.24}
\int_I |\partial_x^m u|^b |\partial_x^k v|\cdot |\partial_x^j v|\,dx \leq  
\sup\limits_{x\in I} |\partial_x^m u|^b 
\Bigl( \int_I |\partial_x^k v|^2\,dx \int_I |\partial_x^j v|^2\,dx \Bigr)^{1/2} \\ \leq
c\sup\limits_{x\in I} |\partial_x^m u|^b
\Bigl[\Bigl( \int_I |\partial_x^l v|^2\,dx\Bigl)^{(k+j)/(2l)}
\Bigl(\int_I |v|^2\,dx\Bigr)^{(2l-j-k)/(2l)} + \int_I |v|^2\,dx \Bigr] \\ \leq
\varepsilon \int_I |\partial_x^l v|^2\,dx + c(\varepsilon)
\Bigl[\sup\limits_{x\in I} |\partial_x^m u|^{2lb/(2l-j-k)} + 
\sup\limits_{x\in I} |\partial_x^m u|^b \Bigr] \int_I |v|^2\rho\,dx,
\end{multline}
where
\begin{multline}\label{3.25}
\int_0^T \sup\limits_{x\in I} |\partial_x^m u|^{2lb/(2l-j-k)} \,dt \\ \leq
\sup\limits_{t\in (0,T)} \Bigl(\int_I |u|^2\,dx\Bigr)^{(2l-2m-1)b/(4l-2j-2k)} 
\int_0^T \Bigl( \int_I |\partial_x^l u|^2 \,dx\Bigr)^{(2m+1)b/(4l-2j-2k)}\,dt \\ \leq
c(T) \|u\|_{(X(Q_T))^n}^{2lb/(2l-j-k)}\,dt;
\end{multline}
also divide $b$ into two parts: $b =b' + b''$, where $0 \leq b' \leq (2l-2j)/(2m+1)$,
$0\leq b'' \leq (2l-2k)/(2m+1)$, then similarly to \eqref{3.24}
\begin{multline}\label{3.26}
\int_I |\partial_x^m u|^b |\partial_x^k w|\cdot |\partial_x^j v|\,dx \leq  
\sup\limits_{x\in I} |\partial_x^m u|^{b' +b''} 
\Bigl(\int_I |\partial_x^j v|^2\,dx  \int_I |\partial_x^k w|^2\,dx\Bigr)^{1/2} \\ \leq
\varepsilon \int_I |\partial_x^l v|^2\,dx + c(\varepsilon)
\Bigl[\sup\limits_{x\in I} |\partial_x^m u|^{2lb'/(l-j)} + 
\sup\limits_{x\in I} |\partial_x^m u|^{2b'} \Bigr] \int_I |v|^2 \rho\,dx \\ +
c\int_I |\partial_x^l w|^2\,dx + 
c\Bigl[\sup\limits_{x\in I} |\partial_x^m u|^{2lb''/(l-k)} + 
\sup\limits_{x\in I} |\partial_x^m u|^{2b''} \Bigr] \int_I |w|^2\,dx,
\end{multline}
where similarly to \eqref{3.25}
\begin{equation}\label{3.27}
\int_0^T \sup\limits_{x\in I} |\partial_x^m u|^{2lb'/(l-j)} \,dt,
\int_0^T \sup\limits_{x\in I} |\partial_x^m u|^{2lb''/(l-k)} \,dt \leq c(T,K).
\end{equation}
Gathering \eqref{3.20}--\eqref{3.27} we deduce from inequality \eqref{3.19} that  
\begin{multline}\label{3.28}
\int_I v_i^2(t,x)\rho\,dx + \alpha_0 \iint_{Q_t} (\partial_x^l v_i)^2 \,dx d\tau \leq
\frac{\alpha_0}{2n} \iint_{Q_t}|\partial_x^l v|^2 \,dx d\tau \\+ 
\int_0^t \gamma(\tau) \int_I |v|^2\rho\,dx d\tau + 
2 \int_{0}^t \|f - \widetilde f\|_{(L_2(I))^n} \|v_i\|_{L_2(I)}  \, d\tau + c(T,K)\|w\|^2_{(X(Q_T))^n},
\end{multline}
where $\|\gamma\|_{L_1(0,T)} \leq c(T,K)$. Summing inequalities \eqref{3.28} with respect to $i$, using estimate \eqref{3.17} and applying Gronwall lemma we complete the proof.
\end{proof}

In this section it remains to prove Theorem~\ref{T1.2}.

\begin{proof}[Proof of Theorem~\ref{T1.2}]
In general, the proof repeats the proof of the existence part of Theorem~\ref{T1.1}. The desired solution is constructed as a fixed point of the map $\Theta$ from \eqref{3.1}. In comparison with \eqref{3.3} here we obtain the following estimate: let
\begin{equation}\label{3.29}
\sigma = \frac{\min\limits_{j,k,m} (4l-2j-2k - (2m+1)b_2(j,k,m))}{4l}
\end{equation}
(note that $\sigma>0$ because of \eqref{1.16}), then
\begin{multline}\label{3.30}
\|g_{j}(t,x,v,\dots,\partial_x^{l-1} v)\|_{(L_{2l/(2l-j)}(0,T;L_2(I)))^n}  \\ \leq 
c(T) T^{\sigma} \sum\limits_{k=0}^{l-1} \sum\limits_{m=0}^{l-1} \sum\limits_{i=1}^2  \|v\|_{(X(Q_T))^n}^{b_i(j,k,m)+1}.
\end{multline}
and similarly to \eqref{3.6}, \eqref{3.9}
\begin{equation}\label{3.31}
\|\Theta v\|_{(X(Q_T))^n} \leq c(T)c_0 + c(T)T^\sigma 
\left( \|v\|_{(X_(Q_T))^n}^{b_1+1} + \|v\|_{(X_(Q_T))^n}^{b_2+1}\right).
\end{equation}
\begin{multline}\label{3.32}
\|\Theta v_1 - \Theta v_2\|_{(X(Q_T))^n}  \\ \leq 
c(T)T^\sigma \left( \|v_1\|_{(X(Q_T))^n}^{b_1} + \|v_2\|_{(X(Q_T))^n}^{b_1} + 
\|v_1\|_{(X(Q_T))^n}^{b_2} + \|v_2\|_{(X(Q_T))^n}^{b_2}\right)  \\ \times
\|v_1-v_2\|_{(X(Q_T))^n}.
\end{multline}

Now for the fixed $\delta$ choose $T_0>0$ such that
\begin{equation}\label{3.33}
4c(T_0) T_0^\sigma \left( (2c(T_0) \delta)^{b_1} + (2c(T_0)\delta)^{b_2}\right)  \leq 1
\end{equation}
(it is possible since $c(T)$ does not decrease in $T$) and then for every $T\in (0,T_0]$ choose an arbitrary $r$ such that
\begin{equation}\label{3.34}
r\geq 2c(T) \delta, \quad 4c(T)T^\sigma (r^{b_1} + r^{b_2}) \leq 1
\end{equation}
(this set is not empty because of \eqref{3.33}). Then the map $\Theta$ is a contraction on the ball $\overline X_{rn}(Q_T)$.

In order to prove uniqueness in the whole space note that for an arbitrary large $r$ the value of $T_0$ can be chosen sufficiently small such that the solution of the considered problem $u\in (X(Q_{T_0})^n)$  is the unique fixed point of the contraction $\Theta$ in $\overline X_{rn}(Q_{T_0})$.
\end{proof}

\section{The inverse problem}\label{S4}

 We start with the linear case. The following lemma is the crucial part of the study. 
 
\begin{lemma}\label{L4.1}
Let the assumptions on the functions $a_j$ from the hypothesis of Theorem~\ref{T1.3} be satisfied. Let condition \eqref{1.6} be valid and for any $i=1,\dots,n$, satisfying $m_i>0$, for $k=1,\dots,m_i$ the functions $\omega_{ki}$ satisfy condition \eqref{1.12}, $\varphi_{ki} \in \widetilde W_1^1(0,T)$, $h_{ki} \in C([0,T];L_2(I))$ and for the corresponding functions $\psi_{kji}$ conditions \eqref{1.19} be verified. Then there exists a unique set of $M$ functions $F=\{F_{ki}(t), i: m_i>0, k=1,\dots,m_i\} = \Gamma\{\varphi_{ki}, i: m_i>0, k=1,\dots,m_i\} \in (L_1(0,T))^M$ such that for $f = (f_1,\dots,f_n)^T \equiv H F$, where for any $i=1,\dots,n$ the function $f_i(t,x)$ is presented by formula \eqref{1.4}, where $h_{0i} \equiv 0$ ($f_i \equiv 0$ if $m_i=0$), the corresponding function 
\begin{equation}\label{4.1}
u = S_0 f = (S_0 \circ H) F,
\end{equation}
verifies all conditions \eqref{1.5}. Moreover, the linear operator $\Gamma: \bigl(\widetilde W_1^1(0,T)\bigr)^M \to \bigl(L_1(0,T)\bigr)^M$ is bounded and its norm does not decrease in $T$.
\end{lemma}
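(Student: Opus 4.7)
The plan is to recast the overdetermination system as a Volterra-type linear integral equation for the controls $F=\{F_{ki}\}$, and to solve it via a Gronwall/Neumann series argument whose constants depend only on $T$ and are non-decreasing in $T$.

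First, I would apply Lemma~\ref{L2.3} to $u = S_0 f = (S_0\circ H)F$, which has zero initial and boundary data. Since $\varphi_{ki}(0)=0$ (by \eqref{1.17} with $u_0\equiv 0$) and $q(0;u_i,\omega_{ki})=0$, condition \eqref{1.5} is equivalent to its time derivative $q'(t;u_i,\omega_{ki})=\varphi'_{ki}(t)$ a.e.\ on $(0,T)$. Specialising formula \eqref{2.17} to the present setting (all boundary and $G_j$ terms vanish, and $\int_I f_i\omega_{ki}\,dx=\sum_{j=1}^{m_i}F_{ji}(t)\psi_{kji}(t)$ because $h_{0i}\equiv 0$), what remains is
$$
\sum_{j=1}^{m_i}\psi_{kji}(t)F_{ji}(t)+(\mathcal L_i u)(t)=\varphi'_{ki}(t),
$$
where $\mathcal L_i$ is a fixed bounded linear functional on $\bigl(X(Q_T)\bigr)^n$ obtained by integrating $u$ against $L_2(I)$-weights constructed from $\omega_{ki}$ and the coefficients $a_j$.

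Next, the matrix $\Psi_i(t)=(\psi_{kji}(t))_{k,j=1}^{m_i}$ has continuous entries (since $h_{ki}\in C([0,T];L_2(I))$ and $\omega_{ki}\in H^{2l+1}(I)$), and $\Delta_i(t)\ne 0$ on the compact $[0,T]$ by \eqref{1.19}, so $\Psi_i^{-1}(t)$ exists with a uniform bound. Collecting blocks $\Psi=\diag(\Psi_i)$, $\mathcal L=(\mathcal L_i)$ and writing $F,\varphi'\in\mathbb R^M$, the whole system becomes a linear fixed-point equation
$$
(I+\mathcal K)F=\Psi^{-1}\varphi',\qquad \mathcal K F:=\Psi^{-1}\mathcal L\bigl((S_0\circ H)F\bigr),
$$
on $\bigl(L_1(0,T)\bigr)^M$, whose solution defines the operator $F=\Gamma\varphi$.

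The essential point is that $\mathcal K$ is causal (Volterra) and smoothing. By Theorem~\ref{T2.1} applied on $Q_t$,
$$
\|(S_0\circ H)F\|_{(X(Q_t))^n}\le c_1(T)\,\|F\|_{(L_1(0,t))^M},
$$
with $c_1(T)$ absorbing $\max_{k,i}\|h_{ki}\|_{C([0,T];L_2(I))}$ and non-decreasing in $T$. Combined with the pointwise bound $|\mathcal L u(t)|\le c\,\|u(t,\cdot)\|_{(L_2(I))^n}\le c\,\|u\|_{(X(Q_t))^n}$ and the uniform bound on $\Psi^{-1}$, this yields
$$
\|\mathcal K F\|_{(L_1(0,t))^M}\le C(T)\int_0^t\|F\|_{(L_1(0,s))^M}\,ds,
$$
with $C(T)$ non-decreasing in $T$. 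Gronwall's inequality then delivers uniqueness and the a priori bound $\|\Gamma\varphi\|_{(L_1(0,T))^M}\le c(T)\|\varphi'\|_{(L_1(0,T))^M}$; existence follows from the Neumann series $(I+\mathcal K)^{-1}=\sum_{n\ge 0}(-\mathcal K)^n$, which converges in operator norm since iterating the Volterra estimate gives $\|\mathcal K^n\|\le (C(T)T)^n/n!$. Linearity of $\Gamma$ is immediate from the construction.

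I expect the only delicate step to be bookkeeping in the causal estimate: one must ensure that the constants supplied by Lemma~\ref{L2.3} and Theorem~\ref{T2.1} on the sub-rectangle $Q_t$ are monotone in $t$ and non-decreasing in $T$, which is already built into the statements of those results. Everything else is a routine reduction to a scalar Volterra integral inequality.
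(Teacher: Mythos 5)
Your proposal is correct and follows the same skeleton as the paper's proof: reduce \eqref{1.5} to the a.e.\ identity $q'(t;u_i,\omega_{ki})=\varphi'_{ki}(t)$ via Lemma~\ref{L2.3} (both sides vanish at $t=0$), use \eqref{2.17} with zero initial/boundary data and $G_j\equiv 0$ to isolate $\sum_j\psi_{kji}(t)F_{ji}(t)$, invert the matrix $\bigl(\psi_{kji}(t)\bigr)$ uniformly thanks to \eqref{1.19}, and exploit the causal estimate $\|u(t,\cdot)\|_{(L_2(I))^n}\le c(T)\sum_{j,i}\|h_{ji}\|_{C([0,T];L_2(I))}\|F_{ji}\|_{L_1(0,t)}$ from Theorem~\ref{T2.1} to get a Volterra structure. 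The only genuine difference is how the resulting Volterra equation is inverted: the paper encodes the matrix inversion through Cramer determinants, proves the affine map $A$ is a contraction in the exponentially weighted norm $\|e^{-\gamma t}\,\cdot\,\|_{(L_1(0,T))^M}$, and then obtains boundedness of $\Gamma=\Lambda^{-1}$ from the Banach inverse-mapping theorem, proving the monotonicity of the norm in $T$ afterwards by extending $\varphi_{ki}$ by the constant value $\varphi_{ki}(T)$; your Neumann series with the factorial bound $\|\mathcal K^m\|\le (C(T)T)^m/m!$ is the standard equivalent of the weighted-norm trick, and it buys you an explicit estimate $\|\Gamma\|\le c(T)$ with $c(T)$ non-decreasing (since $1/\Delta_0$, $\psi_0$, $\max\|h_{ki}\|_{C([0,T];L_2(I))}$ and the constants of Theorem~\ref{T2.1} and Lemma~\ref{L2.3} are all monotone in $T$), so the abstract bounded-inverse theorem is not needed. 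One pedantic caveat: your argument yields a non-decreasing upper bound for the norm of $\Gamma$ rather than monotonicity of the operator norm itself, which is what the lemma literally asserts; to recover the latter, add the paper's one-line observation (extend $\varphi$ past $T$ by a constant and use uniqueness/causality to compare $\Gamma_T$ with $\Gamma_{T_1}$), though for the later applications the non-decreasing bound already suffices.
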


\begin{proof}
First of all note that by virtue of \eqref{1.18}, \eqref{1.19}
\begin{equation}\label{4.2}
|\Delta_i(t)| \geq \Delta_0 >0, \ |\psi_{kji}(t)| \leq \psi_0, \quad t\in [0,T].
\end{equation}

On the space $\bigl(L_1(0,T)\bigr)^M$ introduce $M$ linear operators $\Lambda_{ki} = Q(\omega_{ki})\circ S_0\circ H$. Let $\Lambda = \{\Lambda_{ki}\}$. Then since $H F \in \bigl(L_1(0,T;L_2(I))\bigr)^n$ by Theorem \ref{T2.1} and Lemma \ref{L2.1} the operator $\Lambda$ acts from the space $\bigl(L_1(0,T)\bigr)^M$ into the space $\bigl(\widetilde W_1^1 (0,T)\bigr)^M$ and is bounded.

Note that the set of equalities $\varphi_{ki} = \Lambda_{ki} F$, $i: m_i>0, k=1,\dots,m_i$, for $F\in \bigl(L_1(0,T)\bigr)^M$ obviously means that the set of functions $F$ is the desired one. 

Let for $i$ verifying $m_i>0$
\begin{multline}\label{4.3}
\widetilde r(t;u_i,\omega_{ki}) \equiv 
(-1)^{l+1} \int_I u_i(t,x) \left(a_{(2l+1)i}\omega_{ki}^{(2l+1)} - a_{2l} \omega_{ki}^{(2l)} \right) \,dx \\+
\sum\limits_{m=1}^n \sum\limits_{j=0}^{l-1} (-1)^{j+1} \int_I u_m(t,x) \left[ (a_{(2j+1)im} \omega_{ki}^{(j)})^{(j+1)} - (a_{(2j)im} \omega_{ki}^{(j)})^{(j)} \right] \,dx,
\end{multline}
where $u = (u_1,\dots,u_n)^T = (S_0\circ H)F$. Then from \eqref{2.17} it follows that for
$q(t;u_i,\omega_{ki}) = \bigl(\Lambda_{ki}F\bigr)(t)$
\begin{equation}\label{4.4}
q'(t;u_i,\omega_{ki}) = \widetilde r(t;u_i,\omega_{ki}) + 
\sum\limits_{j=1}^{m_i} F_{ji}(t) \psi_{kji}(t),
\end{equation}
where the functions $\psi_{kji}$ are given by formula \eqref{1.18}. Let
\begin{equation}\label{4.5}
y_{ki}(t) \equiv q'(t;u_i,\omega_{ki}) - \widetilde r(t;u_i,\omega_{ki}),\quad k=1,\dots,m_i.
\end{equation}
and $\widetilde\Delta_{ki}(t)$ be the determinant of the $m_i\times m_i$-matrix, where in comparison with the matrix $\bigl(\psi_{kji}(t)\bigr)$ the $k$-th column is substituted by the column $\bigl(y_{1i}(t),\dots,y_{m_ii}(t)\bigr)^T$. Then \eqref{4.4} implies
\begin{equation}\label{4.6}
F_{ki}(t) = \frac{\widetilde\Delta_{ki}(t)}{\Delta_i(t)},\quad k=1,\dots,m_i.
\end{equation}
Let
\begin{equation}\label{4.7}
z_{ki}(t) \equiv \varphi'_{ki}(t) - \widetilde r(t;u_i,\omega_{ki}),\quad k=1,\dots,m_i,
\end{equation}
and $\Delta_{ki}(t)$ be the determinant of the $m_i\times m_i$-matrix, where in comparison with $\widetilde\Delta_{ki}(t)$ the $k$-th column $\bigl(y_{1i}(t),\dots,y_{m_ii}(t)\bigr)^T$ is substituted by the column $\bigl(z_{1i}(t),\dots,z_{m_ii}(t)\bigr)^T$.

Introduce operators $A_{ki}: L_1(0,T) \to L_1(0,T)$ by
\begin{equation}\label{4.8}
(A_{ki}F)(t) \equiv \frac{\Delta_{ki}(t)}{\Delta_i(t)}
\end{equation}
and let $AF = \{A_{ki}F\}$, $A: \bigl(L_1(0,T)\bigr)^M \to \bigl(L_1(0,T)\bigr)^M$.

Note that $\varphi_{ki} = \Lambda_{ki} F$, for all $i: m_i>0, k=1,\dots,m_i$ if and only if $AF=F$.

Indeed, if $\varphi_{ki} = \Lambda_{ki} F$, then $\varphi'_{ki}(t) \equiv q'(t;u_i,\omega_{ki})$ for the function $q(t;u_i,\omega_{ki}) \equiv \bigl(\Lambda_{ki}F\bigr)(t)$  and equalities \eqref{4.5}, \eqref{4.7} yield $\Delta_{ki}(t) \equiv \widetilde\Delta_{ki}(t)$. Hence, $AF=F$. 

Vice versa, if $AF=F$, then $\Delta_{ki}(t) \equiv \widetilde\Delta_{ki}(t)$ and the condition $\Delta_i(t) \ne 0$ implies $z_{ki}(t) \equiv y_{ki}(t)$ and so $\varphi'_{ki}(t) \equiv q'(t;u_i,\omega_{ki})$. Since $\varphi_{ki}(0) = q(0;u_i,\omega_{ki})=0$, we have $q(t;u_i,\omega_{ki}) \equiv \varphi_{ki}(t)$.

Next, we show that the operator $A$ is a contraction under the choice of a special norm in the space $\bigl(L_1(0,T)\bigr)^M$. 

Let $F_1, F_2 \in \bigl(L_1(0,T)\bigr)^M$, $u_m \equiv  (S_0 \circ H)F_m$, $m=1,\; 2$, and let $\Delta_{ki}^*(t)$ be the determinant of the $m_i\times m_i$-matrix, where in comparison with the matrix $\bigl(\psi_{kji}(t)\bigr)$ the $k$-th column is substituted by the column, where on the $j$-th line stands the element
$\widetilde r(t;u_{1i},\omega_{ji}) - \widetilde r(t;u_{2i},\omega_{ji}) = 
\widetilde r(t;u_{1i} - u_{2i},\omega_{ji})$. Then
\begin{equation}\label{4.9}
\bigl(A_{ki}F_1\bigr)(t) - \bigl(A_{ki}F_2\bigr)(t) = - \frac{\Delta^*_{ki}(t)}{\Delta_i(t)}.
\end{equation}
By \eqref{2.8} for $t\in [0,T]$ 
\begin{equation}\label{4.10}
\|u_{1}(t,\cdot) - u_{2}(t,\cdot)\|_{(L_2(I))^n} \leq 
c(T) \sum\limits_{i: m_i>0} \sum\limits_{j=1}^{m_i} \|h_{ji}\|_{C([0,T];L_2(I))} \|F_{1ji} - F_{2ji}\|_{L_1(0,t)}.
\end{equation}
Let $\gamma>0$, then by virtue of \eqref{4.2}, \eqref{4.3}, \eqref{4.9} and \eqref{4.10}
\begin{multline}\label{4.11}
\|e^{-\gamma t} (AF_1 - AF_2)\|_{(L_1(0,T))^M}  \\ \leq 
\frac{c\bigl(\{\|\omega_{ji}\|_{H^{2l+1}(I)}\},\psi_0\bigr)}{\Delta_0} 
\int_0^T e^{-\gamma t} \|u_{1}(t,\cdot) - u_{2}(t,\cdot)\|_{(L_2(I))^n} \,dt \\ \leq
c\bigl(T,\bigl(\{\|\omega_{ji}\|_{H^{2l+1}(I)}\},\psi_0,\{\|h_{ji}\|_{C([0,T];L_2(I))}\}\bigr) \\ \times \int_0^T e^{-\gamma t} \int_0^t \sum\limits_{i: m_i>0} \sum\limits_{j=1}^{m_i}
|F_{1ji}(\tau) - F_{2ji}(\tau)| \,d\tau dt \\ = 
c\int_0^T \sum\limits_{i: m_i>0} \sum\limits_{j=1}^{m_i} 
|F_{1ji}(\tau) - F_{2ji}(\tau)| \int_\tau^T e^{-\gamma t} \,dt d\tau \\ \leq
\frac{c}{\gamma}\| e^{-\gamma\tau} (F_1 - F_2)\|_{(L_1(0,T))^M}.
\end{multline}
It remains to choose respectively large $\gamma$.

As a result, for any set of functions $\varphi_{ki} \in \bigl(\widetilde W_1^1(0,T)\bigr)^M$ there exists a unique set of functions $F \in \bigl(L_1(0,T)\bigr)^M$ verifying $AF=F$, that is $\varphi_{ki} = \Lambda_{ki} F$. It means that the operator $\Lambda$ is invertible and so the Banach theorem implies that the inverse operator $\Gamma= \Lambda^{-1} : \bigl(\widetilde W_1^1(0,T)\bigr)^M \to \bigl(L_1(0,T)\bigr)^M$ is continuous. In particular,
\begin{equation}\label{4.12}
\|\Gamma\{\varphi_{ki}\}\|_{(L_1(0,T))^M} \leq c(T) \|\{\varphi_{ki}\}\|_{(\widetilde W_1^1(0,T))^M}.
\end{equation}

For an arbitrary $T_1>T$ extend the functions $\varphi_{ki}$ by the constant $\varphi_{ki}(T)$ to the interval $(T,T_1)$. Then the analog of inequality \eqref{4.12} on the interval $(0,T_1)$ for such a function evidently holds with $c(T) \leq c(T_1)$. It means that the norm of the operator $\Gamma$ is non-decreasing in $T$.
\end{proof}

The next result is the solution of the corresponding inverse problem for the full linear problem.

\begin{theorem}\label{T4.1}
Let the function $f$ be given by formula \eqref{1.4} and condition \eqref{1.6} be satisfied.
Let the functions $a_i$, $u_0$, $(\mu_0,\dots,\mu_{l-1})$, $(\nu_0,\dots,\nu_l)$, $h_0$, $\varphi_{ki}$, $\omega_{ki}$, $h_{ki}$ satisfy the hypothesis of Theorem~\ref{T1.3} and the functions $G_j$ satisfy the hypothesis of Theorem \ref{T2.1}. Then there exists a unique set of $M$ functions $F=\{F_{ki}(t), i: m_i>0, k=1,\dots,m_i\} \in (L_1(0,T))^M$ such that the corresponding unique solution $u\in \bigl(X(Q_T)\bigr)^n$ of problem \eqref{2.3}, \eqref{1.2},  \eqref{1.3} verifies all conditions \eqref{1.5}. Moreover, the functions $F$ and $u$ are given by formulas
\begin{gather}\label{4.13}
F = \Gamma \Bigl\{\varphi_{ki} - Q(\omega_{ki})\bigl( \widetilde S W + S_0 h_0 + \sum\limits_{j=0}^l \widetilde S_j G_j\bigr)_i \Bigr\}, \\
\label{4.14}
u = \widetilde S W + S_0 h_0 + \sum\limits_{j=0}^l S_j G_j +(S_0\circ H) F.
\end{gather}
\end{theorem}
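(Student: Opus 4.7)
The plan is to reduce Theorem~\ref{T4.1} to Lemma~\ref{L4.1} by exploiting the linearity of problem \eqref{2.3}, \eqref{1.2}, \eqref{1.3} together with the additive structure of the representation $f = h_0 + HF$ and the superposition already encoded in the operators $\widetilde S$, $S_0$, $\widetilde S_j$.

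First I would split any prospective solution as $u = u_* + u_c$, where
\[
u_* \equiv \widetilde S W + S_0 h_0 + \sum_{j=0}^l \widetilde S_j G_j, \qquad u_c \equiv (S_0\circ H)F.
\]
By Theorem~\ref{T2.1} the function $u_*\in \bigl(X(Q_T)\bigr)^n$ is well defined and is the unique weak solution of problem \eqref{2.3}, \eqref{1.2}, \eqref{1.3} with $f$ replaced by $h_0$ (no control term). Applying $Q(\omega_{ki})$ to the $i$-th component and using the overdetermination condition \eqref{1.5}, I would see that the identities $Q(\omega_{ki})u_i = \varphi_{ki}$ for all $i$ with $m_i>0$ and $k=1,\dots,m_i$ are equivalent to
\[
Q(\omega_{ki})\bigl((S_0\circ H)F\bigr)_i = \widetilde\varphi_{ki} \equiv \varphi_{ki} - Q(\omega_{ki})(u_*)_i.
\]

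The key technical step is then to verify that $\{\widetilde\varphi_{ki}\}\in \bigl(\widetilde W_1^1(0,T)\bigr)^M$, so that Lemma~\ref{L4.1} applies. The Sobolev regularity $\widetilde\varphi_{ki}\in W_1^1(0,T)$ is immediate from $\varphi_{ki}\in W_1^1(0,T)$ together with Lemma~\ref{L2.3} applied to $u_*$ (whose data satisfy the hypothesis of Theorem~\ref{T2.1}). The vanishing condition $\widetilde\varphi_{ki}(0)=0$ uses \eqref{1.17} and the observation that $u_*(0,x) = u_0(x)$, because $\widetilde S W$ has initial trace $u_0$ while $S_0 h_0$ and $\widetilde S_j G_j$ have zero initial trace by construction; hence $Q(\omega_{ki})(u_*)_i(0) = \int_I u_{0i}\omega_{ki}\,dx = \varphi_{ki}(0)$. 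This step is where the compatibility condition \eqref{1.17} is essential, and it is the main point that must be checked with care.

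Given this, Lemma~\ref{L4.1} produces a unique $F = \Gamma\{\widetilde\varphi_{ki}\}\in \bigl(L_1(0,T)\bigr)^M$ with $Q(\omega_{ki})((S_0\circ H)F)_i = \widetilde\varphi_{ki}$, which is precisely formula \eqref{4.13}. Setting $u = u_* + (S_0\circ H)F$ yields \eqref{4.14}; by linearity of problem \eqref{2.3}, \eqref{1.2}, \eqref{1.3} this $u\in \bigl(X(Q_T)\bigr)^n$ is a weak solution with $f = h_0 + HF$ and satisfies all overdetermination conditions \eqref{1.5}. For uniqueness, if $(u^{(1)},F^{(1)})$ and $(u^{(2)},F^{(2)})$ are two such pairs, Theorem~\ref{T2.1} gives $u^{(1)} - u^{(2)} = (S_0\circ H)(F^{(1)} - F^{(2)})$, and applying $Q(\omega_{ki})$ together with the invertibility of the operator $\Lambda$ established in Lemma~\ref{L4.1} forces $F^{(1)} = F^{(2)}$ and hence $u^{(1)} = u^{(2)}$.
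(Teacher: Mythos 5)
Your proposal is correct and follows essentially the same route as the paper: decompose the solution as $u_* + (S_0\circ H)F$ with $u_* = \widetilde S W + S_0 h_0 + \sum_j \widetilde S_j G_j$, use \eqref{1.17} together with the fact that only $\widetilde S W$ carries the initial trace to show $\widetilde\varphi_{ki} = \varphi_{ki} - Q(\omega_{ki})(u_*)_i \in \widetilde W_1^1(0,T)$, and then invoke Lemma~\ref{L4.1} for existence and uniqueness of $F$. Your explicit uniqueness argument via the injectivity of $\Lambda$ is just an unpacking of the paper's terse appeal to Lemma~\ref{L4.1}, and you correctly use Lemma~\ref{L2.3} for the $W_1^1$ regularity where the paper's citation of Lemma~\ref{L2.1} appears to be a typo.
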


\begin{proof}
Set
$$
v \equiv S(u_0, (\mu_0,\dots,\mu_{l-1}), (\nu_0,\dots,\nu_l), h_0, (G_0,\dots,G_l)) = \widetilde S W + S_0 h_0  +\sum\limits_{j=0}^l \widetilde S_j G_j.
$$
Lemma \ref{L2.1} implies $Q(\omega_{ki}) v_i \in W_1^1(0,T)$. Moreover, by virtue of \eqref{1.17} $Q(\omega_{ki}) v_i \big|_{t=0} = \varphi_{ki}(0)$. Set
$$
\widetilde\varphi_{ki} \equiv \varphi_{ki} - Q(\omega_{ki}) v_i,
$$
then $\widetilde\varphi_{ki} \in \widetilde W_1^1(0,T)$. In turn, Lemma \ref{L4.1} implies that the functions $F\equiv \Gamma\{\widetilde\varphi_{ki}\}$ and $u \equiv v +(S_0\circ H )F$
provide the desired result. Uniqueness also follows from Lemma \ref{L4.1}.
\end{proof}

Now we pass to the nonlinear equation.

\begin{proof}[Proof of Theorem \ref{T1.3}]
On the space $\bigl(X(Q_T)\bigr)^n$ consider a map $\Theta$
\begin{gather}\label{4.15}
u = \Theta v \equiv \widetilde S W +S_0 h_0 - 
\sum\limits_{j=0}^l \widetilde S_j g_j(t,x,v,\dots,\partial_x^{l-1} v) +(S_0\circ H)F, \\
\label{4.16}
F \equiv \Gamma \Bigl\{ \varphi_{ki} - Q(\omega_{ki})\bigl( \widetilde S W +S_0 h_0 - \sum\limits_{j=0}^l \widetilde S_j g_j(t,x,v,\dots,\partial_x^{l-1} v)\bigr)_i\Bigr\}.
\end{gather}
Then estimate \eqref{3.5} and Theorem~\ref{T4.1} applied to $G_j(t,x) \equiv 
g_j(t,x,v,\dots,\partial_x^{l-1} v)$ ensure that the map $\Theta$ exists. 

Apply Lemmas \ref{L2.3} and \ref{L4.1}, then the function $F$ from \eqref{4.16} is estimated as follows:
\begin{multline}\label{4.17}
\|F\|_{(L_1(0,T))^M} \leq c(T) \Bigl[ \|u_0\|_{(L_2(I))^n} + \|(\mu_0,\dots,\mu_{l-1})\|_{(\mathcal B^{l-1}(0,T))^n} \\ + 
\|(\nu_0,\dots,\nu_l)\|_{(\mathcal B^l(0,T))^n} +
\|h_0\|_{(L_1(0,T;L_2(I)))^n}+ \|\{\varphi'_{ki}\}\|_{(L_1(0,T))^M}  \\+
\|v\|_{(X(Q_T))^n}^{b_1+1} + \|v\|_{(X(Q_T))^n}^{b_2+1} \Bigr];
\end{multline} 
therefore, since also
$$
\|HF\|_{(L_1(0,T;L_2(I)))^n} \leq \max\limits_{i: m_i>0, k=1,\dots,m_i} 
\bigl(\|h_{ki}\|_{C([0,T];L_2(I))}\bigr) \|F\|_{(L_1(0,T))^M},
$$
Theorem \ref{T2.1} provides for the map $\Theta$ estimate \eqref{3.6}.

Next, for any functions $v_1, v_2 \in \bigl(X(Q_T)\bigr)^n$ since 
\begin{multline}\label{4.18}
\Theta v_1 - \Theta v_2 = -\sum\limits_{j=0}^l \widetilde S_j \left[g_j(t,x,v_1,\dots,\partial_x^{l-1} v_1) - g_j(t,x,v_2,\dots,\partial_x^{l-1} v_2)\right] \\+
(S_0\circ H \circ\Gamma )\Bigl\{Q(\omega_{ki})\Bigl(\!\sum\limits_{j=0}^l \widetilde S_j \left[g_j(t,x,v_1,\dots,\partial_x^{l-1} v_1) - g_j(t,x,v_2,\dots,\partial_x^{l-1} v_2)\right]\Bigr)_i\Bigr\},
\end{multline}
using \eqref{3.8} we derive estimate \eqref{3.9}. 

Now choose $r>0$ and $\delta>0$ as in \eqref{3.10}, \eqref{3.11}.
Then it follows from \eqref{3.6} and \eqref{3.9} that on the ball $\overline X_{rn}(Q_T)$ the map $\Theta$ is a contraction. Its unique fixed point $u\in \bigl(X(Q_T)\bigr)^n$ is the desired solution. Moreover, Theorem \ref{T4.1} implies that the function $F$ in \eqref{4.16} (for $v\equiv u$) is determined in a unique way. 

Continuous dependence is obtained similarly to \eqref{3.6}, \eqref{3.9}.
\end{proof}

\begin{proof}[Proof of Theorem \ref{T1.4}]
In general, the proof repeats the previous argument. The desired solution is constructed as a fixed point of the map $\Theta$ from \eqref{4.15}, \eqref{4.16}. In comparison with \eqref{3.6}, \eqref{3.9} here (also with the use of \eqref{4.18}) we obtain estimates \eqref{3.31} and \eqref{3.32}, where $\varepsilon$ is defined in \eqref{3.29}.

The end of the proof is the same as in Theorem~\ref{T1.2} (with the corresponding supplements as in Theorem~\ref{T1.3}).
\end{proof}

\section*{Acknowledgments}

This paper has been supported by Russian Science Foundation grant 23-21-00101.

\end{document}